\def\span{{\rm span}}
\def\om{\omega}
\def\ve{\varepsilon}
\def\L{\mathcal{L}}
\def\({\left(}
\def\){\right)}
\def\[{\left[}
\def\]{\right]}
\newcommand{\be}{\begin{equation}}
\newcommand{\ee}{\end{equation}}
\newcommand{\ben}{\begin{equation*}}
\newcommand{\een}{\end{equation*}}
\newcommand{\bea}{\begin{eqnarray}}
\newcommand{\eea}{\end{eqnarray}}
\newcommand{\bean}{\begin{eqnarray*}}
\newcommand{\eean}{\end{eqnarray*}}
\begin{document}
% --------------------------------------------------------------------
%\doi{} % doi number
% --------------------------------------------------------------------
%\Issue{0}{0}{0}{0}{0} % volume, number, start page, end page, year
% --------------------------------------------------------------------
\HeadingAuthor{Devsi Bantva} % short list of authors for header
\HeadingTitle{Hamiltonian chromatic number of block graphs} % short title for header
% --------------------------------------------------------------------
\title{Hamiltonian chromatic number of block graphs} % Paper title
%\Ack{} % Grants Acknowledgments
% --------------------------------------------------------------------

\author[ddb]{Devsi Bantva}{devsi.bantva@gmail.com} % Authors
\affiliation[ddb]{Lukhdhirji Engineering College, Morvi 363 642 \\ Gujarat (India)} % Affiliations

%% --------------------------------------------------------------------
%   Paper history
%% --------------------------------------------------------------------
%\submitted{}%
%\reviewed{}%
%\revised{}%
%\accepted{}%
%\final{}%
%\published{}%
%\type{}%
%\editor{}%
%% --------------------------------------------------------------------

\maketitle

%% --------------------------------------------------------------------
%       Abstract
%% --------------------------------------------------------------------

\begin{abstract} Let $G$ be a simple connected graph of order $n$. A hamiltonian coloring $c$ of a graph $G$ is an assignment of colors (non-negative integers) to the vertices of $G$ such that $D(u, v)$ + $|c(u) - c(v)|$ $\geq$ $n - 1$ for every two distinct vertices $u$ and $v$ of $G$, where $D(u, v)$ denotes the detour distance between $u$ and $v$ in $G$ which is the length of the longest path between $u$ and $v$. The value \emph{hc(c)} of a hamiltonian coloring $c$ is the maximum color assigned to a vertex of $G$. The hamiltonian chromatic number, denoted by $hc(G)$, is min\{$hc(c)$\} taken over all hamiltonian coloring $c$ of $G$. In this paper, we give a necessary and sufficient condition to achieve a lower bound for the hamiltonian chromatic number of block graphs given in \cite[Theorem 1]{Bantva1}. We present an algorithm for optimal hamiltonian coloring of a special class of block graphs, namely $SDB(p/2)$ block graphs. We characterize level-wise regular block graphs and extended star of blocks achieving this lower bound.
\end{abstract}

% --------------------------------------------------------------------
\Body % body of paper begins here
% --------------------------------------------------------------------

% The content of the paper goes here

\section{Introduction}\label{sec:intro}

\textbf{Motivation.} The notion of graph colorings deals with the assignment of non-negative integers to vertices or edges or both of a graph $G$ according to certain rule. A radio $k$-colorings of a graph is the one, which is motivated by the channel assignment problem. In the channel assignment problem, we seek to assign channels to each tv or radio transmitters located at different places such that it satisfies interference constraints. The interference between two transmitters is closely related to the distance between them. It is observed that closer the transmitters then higher the interference; hence the different level of interference occur according to distance between transmitters. In a graph model of channel assignment problem the transmitters are represented by vertices of graph and interference constraints is imposed on edges of a graph. Motivated through this Chartrand \emph{et al.}\cite{Chartrand} introduced the concept of radio $k$-colorings of graphs defined as follows: For a connected graph $G$ of diameter $d$ and an integer $k$ with $1 \leq k \leq d$, a \emph{radio $k$-coloring} of $G$ is an assignment of colors (non-negative integers) to the vertices of $G$ such that $d(u,v)+|c(u)-c(v)| \geq 1+k$ for every two distinct vertices $u$ and $v$ of $G$. The value $rc_{k}(c)$ of a radio $k$-coloring $c$ of $G$ is the maximum color assigned to a vertex of $G$; while the \emph{radio $k$-chromatic number} $rc_{k}(G)$ of $G$ is min\{$rc_{k}(G)$\} taken over all radio $k$-colorings $c$ of $G$. In particular, for $k$ = $d-1$ antipodal vertices can be colored the same and due to this reason, radio $(d-1)$-coloring is called radio antipodal coloring or simply antipodal coloring.

While studying antipodal colorings for paths $P_{n}$ whose inequality is
\be\label{eqn:anti} d(u,v)+|c(u)-c(v)| \geq d, \ee
Chartrand et al.\cite{Chartrand2} observed that in case of paths $P_{n}$, $d$ = $n-1$ and $d(u,v)$ is same as the the length of a longest $u-v$ path which is denoted by $D(u,v)$ known as detour distance between $u$ and $v$, then \eqref{eqn:anti} is equivalent to
\ben\label{eqn:anti1} D(u,v)+|c(u)-c(v)| \geq n-1. \een

Motivated through this they suggested extension for arbitrary graph $G$ and introduced the concept of hamiltonian coloring of graphs which is defined as follows:

\begin{definition} A \emph{hamiltonian coloring} $c$ of a graph $G$ of order $n$ is an assignment of colors (non-negative integers) to the vertices of $G$ such that for every two distinct vertices $u$ and $v$ of $G$ the following holds.
\be\label{eqn:hcdef} D(u, v) + |c(u) - c(v)| \geq n - 1. \ee
The value of $hc(c)$ of a hamiltonian coloring $c$ is the maximum color assigned to a vertex of $G$. The \emph{hamiltonian chromatic number} $hc(G)$ of $G$ is min\{$hc(c)$\} taken over all hamiltonian colorings $c$ of $G$.
\end{definition}

Chartrand \emph{et al.}\cite{Chartrand2} found that two vertices $u$ and $v$ can be assigned the same color only if $G$ contains a hamiltonian $u-v$ path (a $u-v$ path which contains every vertex of graph $G$). Moreover, if $G$ is a hamiltonian-connected graph (a graph which contains a hamiltonian paths for every pair of vertices of it) then all the vertices can be assigned the same color. Thus, they notice that in a certain sense, the hamiltonian chromatic number of a connected graph $G$ measures how close $G$ is to being hamiltonian-connected; less the hamiltonian chromatic number of a connected graph $G$ is, the closer $G$ is to being hamiltonian-connected.

Note that any optimal hamiltonian coloring always assign label 0 to some vertex, then the \emph{span} of any hamiltonian coloring $c$ which is defined as max\{$|c(u) - c(v)|$ : $u, v \in V(G)$\}, is the maximum integer used for coloring. However, in \cite{Chartrand2,Chartrand3,Shen} only positive integers are used as colors. Therefore, the hamiltonian chromatic number defined in this article is one less than that defined in \cite{Chartrand2,Chartrand3,Shen}.

\textbf{Related work.} At present, the hamiltonian chromatic number is known only for handful of graph families. Chartrand \emph{et al.} investigated the exact hamiltonian chromatic numbers for complete graph $K_{n}$, cycle $C_{n}$, star $K_{1,n}$ and complete bipartite graph $K_{r,s}$ in \cite{Chartrand2}. They proved that for every two integers $k$ and $n$ with $k \geq 1$ and $n \geq 3$, there exist a hamiltonian graph (a graph $G$ is called hamiltonian if it has a hamiltonian close path) of order $n$ with hamiltonian chromatics number $k$ if and only if $1 \leq k \leq n$. They also gave an upper bound for the hamiltonian chromatic number of a connected graph in terms of its order. In \cite{Chartrand3}, Chartrand et al. shown that if there exist a hamiltonian coloring of a connected graph $G$ of order $n \geq 3$ such that at least $(n+2)/2$ vertices of $G$ are colored the same, then $G$ is hamiltonian. An upper bound for $hc(P_{n})$ was established by Chartrand \emph{et al.} in \cite{Chartrand2} but the exact value of $hc(P_{n})$ which is equal to the radio antipodal number $ac(P_{n})$ was determined by Khennoufa and Togni in \cite{Khennoufa}. In \cite{Shen}, Shen \emph{et al.} have discussed the hamiltonian chromatic number for graphs $G$ with max\{$D(u,v)$ : $u,v \in V(G)$, $u \neq v$\} $\leq$ $n/2$, where $n$ is the order of graph $G$ and they gave the hamiltonian chromatic number for a special class of caterpillars and double stars. The hamiltonian chromatic number of block graphs and trees is discussed by Bantva in \cite{Bantva1} and \cite{Bantva2}, respectively. The researchers emphasize that determining the hamiltonian chromatic number is interesting but a challenging task even for some basic graph families.

\textbf{Our contribution.} In this paper, we give a necessary and sufficient condition to achieve a lower bound for the hamiltonian chromatics number of block graphs given in \cite[Theorem 1]{Bantva1}. To derive this necessary and sufficient condition, we use an approach similar to one used in \cite{Bantva3,Bantva4}. We also give two other sufficient conditions to achieve this lower bound for the hamiltonian chromatic number of block graphs. We provide an algorithm for the optimal hamiltonian coloring of a special class of block graphs, namely $SDB(n/2)$ block graphs (defined later in the section \ref{sec:main}). As an illustration, we present two class of block graphs, namely level-wise regular block graphs which is more general than symmetric block graphs given in \cite{Bantva1} and extended star of blocks (for definition and detail about both graph families see section \ref{sec:illu}) achieving this lower bound.

\section{Preliminaries}\label{sec:pre}

In this section, we define terms and notations which are necessary for present work. Further, the standard graph theoretic terminology and notation not defined here are used in the sense of \cite{West}.

We consider graph $G$ to be a finite, connected and undirected graph without loops and multiple edges. For a graph $G$, we denote its \emph{vertex set} and \emph{edge set} by $V(G)$ and $E(G)$. The \emph{order} of a graph $G$ is the number of vertices in $G$. For a vertex $v \in V(G)$, the \emph{open neighborhood of $v$} denoted by $N(v)$, is the set of vertices adjacent to $v$. The \emph{distance} $d(u,v)$ between two vertices $u$ and $v$ is the length of a shortest path connecting them. A \emph{cut vertex} of a graph $G$ is a vertex whose deletion increases the number of components of $G$. A \emph{block} of a graph $G$ is a maximal connected subgraph of $G$ that has no cut-vertex. The detour distance between $u$ and $v$, denoted by $D(u,v)$, is the length of the longest path between $u$ and $v$ in $G$. The \emph{detour diameter}, denoted by $\mathcal{D}$, is max\{$D(u,v)$ : $u, v \in V(G)$\}. The \emph{detour eccentricity} $\ve_{D}(v)$ of a vertex $v$ is the detour distance from $v$ to a vertex farthest from $v$. The \emph{detour center} $C_{D}(G)$ of $G$ is the subgraph of $G$ induced by the vertex/vertices of $G$ whose detour eccentricity is minimum. In \cite{Chartrand1}, Chartrand \emph{et al.} shown the following.
\begin{proposition}\label{prop1} The detour center $C_{D}(G)$ of every connected graph $G$ lies in a single block of $G$.
\end{proposition}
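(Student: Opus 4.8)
The plan is to argue by contradiction, mimicking the classical proof that the ordinary center of a tree lies in a single block, but with the detour distance $D(\cdot,\cdot)$ playing the role of the ordinary distance. Suppose the detour center $C_D(G)$ does not lie in a single block. Since $C_D(G)$ is a connected subgraph (this is either assumed or must be shown separately — see below), failing to lie in one block means $C_D(G)$ contains two vertices lying in distinct blocks, and hence it contains a cut vertex $w$ of $G$ whose removal separates two vertices $u_1, u_2 \in V(C_D(G))$ into different components. Let $B_1, B_2, \dots$ be the components of $G - w$, with $u_i$ lying in (the component extended by $w$ to) $G_i$ for $i = 1, 2$. I would then pick a vertex $x$ realizing the detour eccentricity of $w$, i.e. $D(w,x) = \ve_D(w)$, and note that $x$ lies in $G_j$ for some single index $j$; without loss of generality $x \notin G_1$, so $j \neq 1$.

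The key step is the following claim: $\ve_D(w) < \ve_D(u_1)$, which contradicts $u_1 \in C_D(G)$ (since $w$ would then have strictly smaller detour eccentricity, so $w \in C_D(G)$ but $u_1 \notin C_D(G)$ — actually it already contradicts minimality of $\ve_D$ over $C_D(G)$). To prove the claim, take any vertex $y$ with $D(u_1, y) = \ve_D(u_1)$. The crucial structural fact is that every path in $G$ starting in $G_1 \setminus \{w\}$ and ending outside $G_1$ must pass through the cut vertex $w$; consequently, for any vertex $y \notin G_1$, every $u_1$–$y$ path decomposes at $w$ into a $u_1$–$w$ portion inside $G_1$ and a $w$–$y$ portion outside, giving $D(u_1, y) = D_{G_1}(u_1,w) + D(w,y) \geq 1 + D(w, y)$. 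Similarly, if $y \in G_1$, I would show $D(w, y) \geq D(w, u_1) + D_{G_1}(u_1, y)$ is \emph{not} what I want; instead I compare against $x$: since $x \notin G_1$, $D(u_1, x) \geq D_{G_1}(u_1, w) + D(w,x) \geq 1 + \ve_D(w)$, so $\ve_D(u_1) \geq 1 + \ve_D(w) > \ve_D(w)$. This already shows $\ve_D(u_1) > \ve_D(w)$, contradicting that $u_1$ attains the minimum detour eccentricity on $C_D(G)$ while $w$ does not necessarily — but we must also rule out $w \in C_D(G)$. In fact the inequality shows $\ve_D(w) < \ve_D(u_1) = \min_{v \in C_D(G)} \ve_D(v)$, which is absurd since then $w$ would be forced into $C_D(G)$ by definition of the detour center, yet have strictly smaller eccentricity than its members. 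Hence the assumption fails.

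The main obstacle I anticipate is establishing the additivity of detour distance across a cut vertex, namely $D(u_1, x) = D_{G_1}(u_1, w) + D(w, x)$ whenever $u_1 \in G_1$ and $x \notin G_1$: the inequality $D(u_1,x) \geq D_{G_1}(u_1,w) + D(w,x)$ is easy by concatenating a longest $u_1$–$w$ path in $G_1$ with a longest $w$–$x$ path in $G - (G_1 \setminus \{w\})$ (these share only $w$, so the union is a genuine path), but the reverse inequality requires observing that any $u_1$–$x$ path, being forced through $w$, splits into two internally disjoint subpaths whose lengths are bounded by the respective detour distances. For the proposition, only the "$\geq$" direction with a "$+1$" slack is actually needed, so this is mild. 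A secondary point is confirming that $C_D(G)$ is connected, or at least that it cannot be split by a cut vertex into two nonempty parts — but this follows from the same eccentricity comparison: if it were split, the argument above applies verbatim. I would also remark that this is exactly the statement and proof from Chartrand \emph{et al.}~\cite{Chartrand1}, so the write-up can be brief.
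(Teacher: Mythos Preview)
The paper does not actually prove Proposition~\ref{prop1}; it is quoted verbatim from Chartrand, Escuadro and Zhang~\cite{Chartrand1} and used as a black box, so there is no in-paper argument to compare against. Your sketch is the standard cut-vertex argument and is correct: once $u_1\in C_D(G)$ and $x\notin G_1$ realizes $\ve_D(w)$, any longest $w$--$x$ path must avoid $G_1\setminus\{w\}$ (a path leaving $w$ into $G_1$ could never return to reach $x$), so concatenating it with any $u_1$--$w$ path inside $G_1$ gives $\ve_D(u_1)\ge D(u_1,x)\ge 1+\ve_D(w)$, contradicting minimality of $\ve_D(u_1)$. Two small clean-ups: you do not need $C_D(G)$ connected a priori---the hypothesis ``not in a single block'' already yields $u_1,u_2\in C_D(G)$ separated by some cut vertex $w$, and the argument runs from there; and your worry about ``ruling out $w\in C_D(G)$'' is moot, since $\ve_D(w)<\ve_D(u_1)$ directly contradicts $u_1$ having minimum detour eccentricity.
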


A \emph{block graph} is a connected graph all of whose blocks are cliques. We prove the following result about the detour center of block graphs.

\begin{lemma}\label{lem1} The detour center $C_{D}(G)$ of a block graph $G$ is either a vertex or a block.
\end{lemma}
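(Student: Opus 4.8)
The plan is to leverage Proposition \ref{prop1}, which already guarantees that $C_D(G)$ lies in a single block $B$ of $G$. So the whole task reduces to understanding the structure of $C_D(G)$ within that one block, and the claim to establish is the dichotomy: either $C_D(G)$ is a single vertex, or $C_D(G)$ is the whole block $B$ (which is a clique). What I want to rule out is the intermediate case where $C_D(G)$ consists of some, but not all, vertices of $B$.

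First I would fix notation: let $B$ be the block containing $C_D(G)$, and let $u, w$ be two vertices of $B$ that both lie in $C_D(G)$, so $\ve_D(u) = \ve_D(w) = \min_v \ve_D(v)$. The key observation is that for any vertex $x \in V(G)$, a longest $u$–$x$ path and a longest $w$–$x$ path are related through the clique structure of $B$: since $B$ is a clique, $u$ and $w$ are adjacent, and moreover for any third vertex $z$ of $B$, the edge $uz$ can be ``rerouted'' through $w$ (or vice versa) without loss, because $uw, wz \in E(G)$. The main step is to show that if $\ve_D(u) = \ve_D(w)$ for two vertices of $B$, then in fact $\ve_D(z) = \ve_D(u)$ for \emph{every} vertex $z \in V(B)$; hence $C_D(G) \supseteq V(B)$, and since $C_D(G)$ lies in $B$ we get $C_D(G) = B$. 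Conversely, if no two vertices of $B$ share the minimum detour eccentricity, then $C_D(G)$ is a single vertex by definition. This case split gives exactly the two alternatives in the statement.

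The heart of the argument — and the step I expect to be the main obstacle — is the claim that a vertex $z$ ``between'' two detour-center vertices $u$ and $w$ (all three in the clique $B$) cannot have strictly larger detour eccentricity. The intuition: take any $x$ realizing $\ve_D(z)$, i.e. a longest $z$–$x$ path $P$. If $x \notin V(B)$, then $x$ sits in some component of $G - V(B)$ hanging off a single attachment vertex of $B$; whichever of $u$ or $w$ is ``on the far side'' lets us extend or match the path, using that $z$ is adjacent to both $u$ and $w$ inside $B$, to produce a $u$–$x$ or $w$–$x$ path at least as long as $P$, contradicting $\ve_D(z) > \ve_D(u) = \ve_D(w)$ unless equality holds throughout. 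If $x \in V(B)$, then since $B$ is a clique all pairwise detour distances within $B$ (and into the hanging components) are governed by the same quantities, and one checks directly that $\ve_D$ is constant on $V(B)$. I would organize this via a short sublemma: for a block graph, if $B$ is a block and $a, b \in V(B)$, then $|\ve_D(a) - \ve_D(b)|$ is controlled by the sizes of the components attached at $a$ versus at $b$, from which constancy of $\ve_D$ on any set of size $\ge 2$ of detour-center vertices forces constancy on all of $V(B)$. Once that sublemma is in hand, the Lemma follows immediately.
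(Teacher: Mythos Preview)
Your approach is correct and rests on the same core idea as the paper's: once Proposition~\ref{prop1} gives $C_D(G) \subseteq B$, the clique structure of $B$ lets you interchange two of its vertices at the start of a longest path, so if any two vertices of $B$ share the minimum detour eccentricity then all of them do. The paper's execution is more direct than yours, however: it simply takes $x \in B \setminus C_D(G)$ and a single $y \in C_D(G)$, writes a detour-eccentric path of $x$ in the form $x, y, z_1, \ldots, z_{m-1}$ (possible since $B$ is a clique), and observes that $y, x, z_1, \ldots, z_{m-1}$ is then a path of the same length starting at $y$, contradicting $\ve_D(y) < \ve_D(x)$. Your use of \emph{two} center vertices $u,w$, the case split on whether the eccentric endpoint lies in $V(B)$, and the proposed sublemma bounding $|\ve_D(a)-\ve_D(b)|$ via attached-component sizes are all unnecessary scaffolding; one center vertex and one swap suffice.
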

\begin{proof} Let $G$ be a block graph and $|C_{D}(G)| \neq 1$ then by Proposition \ref{prop1}, we have $C_{D}(G) \subseteq B$ for some block $B$. Now if possible then assume that $C_{D}(G) \neq B$ then there exist a vertex $x \in B \setminus C_{D}(G)$. Note that $m = \ve_{D}(x) > \ve_{D}(y)$ for any $y \in C_{D}(G)$ such that $x,y,z_{1},z_{2},...,z_{l},z_{l+1},...,z_{m-1}$ is detour eccentric path of $x$. Let $x,y,z_{1},z_{2},...,z_{l}$ are the vertices of block $B$. Then consider the detour path $y,x,z_{1},z_{2},...,z_{l},z_{l+1},...,z_{m-1}$ for $y$ which gives $\ve_{D}(y) = m$ which contradicts with our assumption that $m = \ve_{D}(x) > \ve_{D}(y)$. Hence, we obtain, $C_{D}(G) = B$.
\end{proof}

In the present work, we denote $|C_{D}(G)|$ = $\omega$ then by Lemma \ref{lem1}, it is clear that $\omega \geq 1$. The vertex/vertices of detour center $C_{D}(G)$ are called \emph{detour central vertex/vertices} for graph $G$. Moreover, we denote the central vertex by $w$ when $|C_{D}(G)|$ = 1 and \{$w^{0},w^{1},...,w^{\om-1}$\} when $|C_{D}(G)| = \om$. For a block graph $G$, define \emph{detour level function} $\mathcal{L}$ on $V(G)$ by

\begin{center}
$\mathcal{L}(u)$ := min\{$D(w, u)$ : $w \in V(C_{D}(G))$\}, for any $u$ $\in$ $V(G)$.
\end{center}
The \emph{total detour level of a graph} $G$, denoted by $\mathcal{L}(G)$, is defined as

\begin{equation*}\label{total:l}
\mathcal{L}(G) := \displaystyle \sum_{u \in V(G)} \mathcal{L}(u).
\end{equation*}

In a block graph $G$, if $u$ is on the $x-v$ path, where $x$ is the nearest detour central vertex for $v$, then $u$ is an \emph{ancestor} of $v$, and $v$ is a \emph{descendent} of $u$. If $u$ is an ancestor and adjacent to $v$ then $u$ is called \emph{parent} of $v$ and $v$ is called \emph{child} of $u$. In a block graph $G$, a block $B$ is called an \emph{ancestor block} of another block $B^{'}$ if the $x-v$ path, where $x \in V(C_{D}(G))$ and $v \in V(B^{'})$ consists a vertex $u \in V(B) \setminus V(B^{'})$. Let $B$ be a block attached to a central vertex. Then the subgraph induced by $B$ and all its descendent blocks is called a \emph{branch} at $w$. Two branches are called \emph{different} if they are induced by two different blocks attached to the same central vertex, and \emph{opposite} if they are induced by two different blocks attached to different central vertices.

For any $u, v \in V(G)$, define $\phi_{D}(u,v)$ := max\{$\mathcal{L}(t)$ : $t$ is a common ancestor of $u$ and $v$\}, and
\begin{eqnarray*}
\delta_{D}(u,v) & = &\left\{
\begin{array}{ll}
\omega-1, & \mbox{if $|C_{D}(G)|$ = $\omega \geq 2$ and detour path $D(u,v)$ passes} \\
& \mbox{through $C_{D}(G)$}, \\
0, & \mbox{otherwise}.
\end{array}
\right.
\end{eqnarray*}

\begin{lemma}\label{lem:phi} Let $G$ be a block graph with $diam(G) \geq 2$. Then for any $u, v \in V(G)$ the following holds:
\begin{enumerate}[\rm (a)]
\item $\phi_{D}(u,v) \geq 0$;
\item $\phi_{D}(u,v)$ = 0 if and only if $u$ and $v$ are in different or opposite branches;
\item $\delta_{D}(u,v)$ = $\omega-1$ if and only if $G$ has two or more detour central vertices and $u$ and $v$ are in opposite branches;
\end{enumerate}
\end{lemma}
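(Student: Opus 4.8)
The plan is to verify the three equivalences one at a time, using only the definitions of $\phi_D$, $\delta_D$, $\mathcal{L}$, the structure of ancestors/descendants in a block graph, and the fact (Lemma \ref{lem1}) that $C_D(G)$ is a single vertex or a single block. For part (a), I would first observe that every detour central vertex $x \in V(C_D(G))$ is a common ancestor of $u$ and $v$: indeed, by definition of ancestor, each central vertex lies on (the start of) the detour path from the nearest central vertex to any vertex, and more simply, $\mathcal{L}(x) = \min\{D(x',x): x' \in V(C_D(G))\}$; when $\omega = 1$ the unique central vertex $w$ is an ancestor of every vertex and $\mathcal{L}(w) = 0$, while when $\omega \geq 2$ each $w^i$ is an ancestor of every vertex not separated from it, but in any case $C_D(G)$ supplies at least one common ancestor $t$ with $\mathcal{L}(t) \geq 0$. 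Hence the set over which the max defining $\phi_D(u,v)$ is taken is nonempty and contains a term that is $\geq 0$, so $\phi_D(u,v) \geq 0$. I would also note for later use that $\mathcal{L}(t) = 0$ for a central vertex (or for any vertex of $C_D(G)$, since $D(w^i, w^j)$ is computed relative to the min, and a vertex of the central block is at detour level $0$ by the level function definition using $\min$ over $V(C_D(G))$).

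For part (b), the forward direction: suppose $u$ and $v$ are in the same branch — i.e., in branches induced by the same block attached to the same central vertex. Then that block's vertex adjacent to (or equal to) the central vertex, call it $z$, is a common ancestor of both $u$ and $v$ lying strictly below the center, so $\mathcal{L}(z) \geq 1$, giving $\phi_D(u,v) \geq 1 > 0$. Contrapositively, if $\phi_D(u,v) > 0$ there is a common ancestor $t$ with $\mathcal{L}(t) \geq 1$, hence $t \notin V(C_D(G))$; since $t$ is an ancestor of both, both $u$ and $v$ are descendants of $t$, so they lie in the same branch (the branch determined by the block containing the edge from $t$ toward the center, equivalently by the top block of the $t$-containing branch). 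Thus $\phi_D(u,v) = 0$ exactly when no common ancestor has positive level, which by the ancestor structure in a block graph happens precisely when the only common ancestors are central vertices — i.e., when $u$ and $v$ lie in different or opposite branches. I would make the "different vs. opposite" dichotomy explicit: different branches hang off the same central vertex, opposite branches off distinct central vertices, and in either case any common ancestor must be a vertex of $C_D(G)$.

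For part (c), I would argue directly from the definition of $\delta_D$. If $G$ has two or more detour central vertices ($\omega \geq 2$) and $u, v$ lie in opposite branches, then by definition opposite branches are attached to two different central vertices, so every $u$–$v$ path — in particular the detour path realizing $D(u,v)$ — must traverse from one central vertex to another and therefore passes through $C_D(G)$; hence $\delta_D(u,v) = \omega - 1$. Conversely, if $\delta_D(u,v) = \omega - 1$, then by definition $\omega \geq 2$ and the detour path passes through $C_D(G)$; I then need to conclude $u$ and $v$ are in opposite branches. If they were in the same branch, the subpath of the detour path between them could be rerouted to stay within that branch (or the detour path would not need to reach the center at all), and if they were in different branches at the same central vertex $w^i$, the detour path would pass through that single $w^i$ but need not pass through the rest of $C_D(G)$ — here I would lean on the convention/phrasing that "passes through $C_D(G)$" for $\omega \geq 2$ means it uses the central block nontrivially, which forces $u$ and $v$ to be separated by the whole central block, i.e., in opposite branches.

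The main obstacle I anticipate is part (c), specifically pinning down the precise meaning of "the detour path $D(u,v)$ passes through $C_D(G)$" and showing it is equivalent to "$u,v$ in opposite branches" rather than merely "not in the same branch." The subtlety is the case of two different branches at the \emph{same} central vertex: there the detour path does touch $C_D(G)$ at one vertex, so one must argue either that this does not count as "passing through" the (multi-vertex) center, or — more robustly — that one can always choose a detour path between two vertices in different branches at the same central vertex that enters $C_D(G)$ at that vertex and leaves immediately, versus the opposite-branch case where the detour path is forced to use an edge of the central block. Making this rerouting argument rigorous, using that blocks are cliques (so any permutation of central-block vertices can be inserted along a path), is where the real work lies; the other parts are essentially bookkeeping with the level function and the ancestor relation.
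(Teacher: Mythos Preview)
Your proposal is correct and, in spirit, matches the paper's own argument: the paper's entire proof is the single sentence ``The proof is straight forward from the definition of $\phi_{D}(u,v)$, $\delta_{D}(u,v)$ and different and opposite branches in a block graph $G$.'' You are simply unfolding those definitions explicitly, which is more than the paper does.

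One remark worth making: the subtlety you flag in part (c) --- what it means for the detour path to ``pass through $C_D(G)$'' when $u$ and $v$ sit in different branches at the \emph{same} central vertex $w^{i}$ --- is a genuine ambiguity that the paper never clarifies. Your resolution is the right one and can be made precise without appealing to convention: since $w^{i}$ is a cut vertex separating the two branches, every $u$--$v$ path must visit $w^{i}$, but no \emph{simple} path can then visit another central vertex $w^{j}$ and return, so the longest $u$--$v$ path uses exactly one vertex of $C_D(G)$ and $D(u,v)=\mathcal{L}(u)+\mathcal{L}(v)$. This forces $\delta_D(u,v)=0$ via \eqref{eqn:duv}, confirming that ``passes through $C_D(G)$'' must be read as ``uses at least two central vertices,'' i.e., $u$ and $v$ lie in opposite branches. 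So what you list as an obstacle is in fact easily dispatched once you use that paths are simple; you need not invoke any rerouting of the central-block clique.
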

\begin{proof} The proof is straight forward from the definition of $\phi_{D}(u,v)$, $\delta_{D}(u,v)$ and different and opposite branches in a block graph $G$. \end{proof}

Note that the detour distance between any two vertices $u$ and $v$ in a block graph $G$ can be given as
\begin{equation}\label{eqn:duv}
D(u,v) = \mathcal{L}(u) + \mathcal{L}(v) - 2 \phi_{D}(u,v) + \delta_{D}(u,v).
\end{equation}

Moreover, equality holds in (\ref{eqn:duv}) if $u$ and $v$ are in different branches when $\omega$ = 1 and in opposite branches when $\omega \geq 2$.

Define $\xi$ = min\{$|V(B_{i})|-1$ : $B_{i}$ is a block attached to detour central vertex\} when $\omega$ = 1; otherwise $\xi$ = 0.

\section{Hamiltonian chromatic number of block graphs}\label{sec:main}

Note that any hamiltonian coloring $c$ of a block graph $G$ is injective if $G$ has three or more branches as in this case no two vertices of block graph contain hamiltonian path. Therefore, throughout this discussion we assume block graphs with three or more branches. Note that a hamiltonian coloring $c$ on $V(G)$, induces an ordering of $V(G)$, which is a line up of the vertices with increasing images. We denote this ordering by $V(G)$ = \{$u_{0}$, $u_{1}$, $u_{2}$, ..., $u_{n-1}$\} with

0 = $c(u_{0})$ $<$ $c(u_{1})$ $<$ $c(u_{2})$ $<$ ... $<$ $c(u_{n-1})$.

Notice that, $c$ is a hamiltonian coloring, then the span of $c$ is $c(u_{n-1})$.

In \cite{Bantva1}, Bantva gave a lower bound for the hamiltonian chromatics number of block graphs as stated in following Theorem.

\begin{theorem}\cite{Bantva1}\label{hc:lower} Let $G$ be a block graph of order $n$ and $\omega$, $\xi$ and $\mathcal{L}(G)$ are defined as earlier. Then
\begin{equation}\label{eqn:lower}
hc(G) \geq (n - 1)(n-\omega) - 2 \mathcal{L}(G) + \xi.
\end{equation}
\end{theorem}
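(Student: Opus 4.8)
The plan is to prove the lower bound by working with an arbitrary hamiltonian coloring $c$ of $G$ and its induced ordering $V(G) = \{u_0, u_1, \ldots, u_{n-1}\}$ with $0 = c(u_0) < c(u_1) < \cdots < c(u_{n-1})$. Since the span of $c$ equals $c(u_{n-1})$, and $c(u_{n-1}) = \sum_{i=0}^{n-2}\big(c(u_{i+1}) - c(u_i)\big)$, it suffices to produce a good lower bound on each consecutive gap $c(u_{i+1}) - c(u_i)$ and then sum. From the defining inequality \eqref{eqn:hcdef} applied to the pair $u_i, u_{i+1}$ we get $c(u_{i+1}) - c(u_i) \geq n - 1 - D(u_i, u_{i+1})$, so the telescoping sum gives
\begin{equation*}
hc(c) \geq (n-1)^2 - \sum_{i=0}^{n-2} D(u_i, u_{i+1}).
\end{equation*}
Thus everything reduces to an upper bound on $\sum_{i=0}^{n-2} D(u_i, u_{i+1})$, and matching \eqref{eqn:lower} means showing this sum is at most $(n-1)^2 - \big[(n-1)(n-\omega) - 2\mathcal{L}(G) + \xi\big] = (n-1)(\omega - 1) + 2\mathcal{L}(G) - \xi$.

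The key step is to expand $D(u_i, u_{i+1})$ using the distance formula \eqref{eqn:duv}, namely $D(u_i,u_{i+1}) = \mathcal{L}(u_i) + \mathcal{L}(u_{i+1}) - 2\phi_D(u_i,u_{i+1}) + \delta_D(u_i,u_{i+1})$. Summing the $\mathcal{L}$-terms over all consecutive pairs, each vertex $u_i$ appears in at most two consecutive pairs (in pair $i-1$ and pair $i$), and the two extreme vertices $u_0, u_{n-1}$ appear only once; so $\sum_{i=0}^{n-2}\big(\mathcal{L}(u_i) + \mathcal{L}(u_{i+1})\big) = 2\mathcal{L}(G) - \mathcal{L}(u_0) - \mathcal{L}(u_{n-1}) \leq 2\mathcal{L}(G)$. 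For the $\delta_D$ term, Lemma~\ref{lem:phi}(c) says $\delta_D(u_i,u_{i+1}) \leq \omega - 1$ always, and there are $n-1$ consecutive pairs, giving $\sum \delta_D(u_i,u_{i+1}) \leq (n-1)(\omega-1)$. Dropping the (nonnegative, by Lemma~\ref{lem:phi}(a)) $-2\phi_D$ terms gives $\sum D(u_i,u_{i+1}) \leq 2\mathcal{L}(G) + (n-1)(\omega-1)$, which already proves the bound \eqref{eqn:lower} when $\xi = 0$ (the case $\omega \geq 2$).

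To recover the extra $+\xi$ term when $\omega = 1$, I would look more carefully at the pairs of consecutive vertices whose detour path does not gain from the crude estimates above — in particular at the behavior near the central vertex $w$. When $\omega = 1$ we have $\mathcal{L}(u_0) + \mathcal{L}(u_{n-1}) \geq 0$, but we can do better: one can argue that among the vertices, the central vertex $w$ has $\mathcal{L}(w) = 0$, and the blocks attached to $w$ force that when two consecutive vertices $u_i, u_{i+1}$ lie in the same block $B_j$ attached to $w$ (so $D(u_i,u_{i+1})$ is just $|V(B_j)| - 1$ along that clique, which is small), we lose at least $\xi = \min_j(|V(B_j)|-1)$ somewhere relative to the generic estimate $D(u_i,u_{i+1}) \le \mathcal{L}(u_i) + \mathcal{L}(u_{i+1}) + (\omega-1)$. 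Concretely, I expect the sharpened inequality to come from showing $\sum_{i=0}^{n-2} D(u_i,u_{i+1}) \le 2\mathcal{L}(G) - \mathcal{L}(u_0) - \mathcal{L}(u_{n-1}) - \xi$ by exhibiting, for some index $i$, a deficit of at least $\xi + \mathcal{L}(u_0) + \mathcal{L}(u_{n-1})$ between $D(u_i,u_{i+1})$ and $\mathcal{L}(u_i)+\mathcal{L}(u_{i+1})$; this uses that $\phi_D$ is positive (equal to $\mathcal{L}$ of a common ancestor, hence $\ge$ something involving $\xi$) precisely for pairs inside one branch, and that not all of $V(G)$ can avoid such pairs. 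The main obstacle is exactly this last bookkeeping: tracking which consecutive pairs sit in a common branch, bounding $\phi_D$ from below on those pairs, and combining it with the boundary terms $\mathcal{L}(u_0), \mathcal{L}(u_{n-1})$ to extract a clean $+\xi$; the rest is routine telescoping and substitution.
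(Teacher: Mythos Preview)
The paper does not supply a proof of this theorem; it is quoted from \cite{Bantva1}, and only the equality case is analysed afterwards in Theorem~\ref{hc:main}. Your telescoping framework is the expected one and is exactly what the necessity argument of Theorem~\ref{hc:main} implicitly relies on: combining $c(u_{i+1})-c(u_i)\ge n-1-D(u_i,u_{i+1})$ with \eqref{eqn:duv} and summing over $i=0,\dots,n-2$ gives
\begin{equation*}
hc(c)\ \ge\ (n-1)(n-\omega)-2\mathcal{L}(G)+\mathcal{L}(u_0)+\mathcal{L}(u_{n-1})+2\sum_{i=0}^{n-2}\phi_D(u_i,u_{i+1}),
\end{equation*}
after using $\delta_D(u_i,u_{i+1})\le\omega-1$ and collecting terms. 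Dropping the nonnegative $\phi_D$-sum already yields \eqref{eqn:lower} with $\xi$ replaced by $\mathcal{L}(u_0)+\mathcal{L}(u_{n-1})$, so the case $\omega\ge 2$ (where $\xi=0$) is finished exactly as you say.

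Your plan to recover the extra $+\xi$ when $\omega=1$ is where the argument goes astray. You propose to locate a consecutive pair $u_i,u_{i+1}$ in a common branch and extract a deficit of size $\xi$ from the $\phi_D$ term, aiming at $\sum_i D(u_i,u_{i+1})\le 2\mathcal{L}(G)-\mathcal{L}(u_0)-\mathcal{L}(u_{n-1})-\xi$. This cannot work: an optimal ordering may place every consecutive pair in different branches (all $\phi_D(u_i,u_{i+1})=0$), in which case $\sum_i D(u_i,u_{i+1})=2\mathcal{L}(G)-\mathcal{L}(u_0)-\mathcal{L}(u_{n-1})$ exactly, and your target inequality is false. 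The missing observation is simpler and lives entirely in the boundary terms: when $\omega=1$, every vertex $v\neq w$ satisfies $\mathcal{L}(v)\ge\xi$, because any longest $w$--$v$ path must traverse a Hamiltonian path through the block $B$ attached to $w$ that contains the branch of $v$, contributing $|V(B)|-1\ge\xi$. Since $u_0\neq u_{n-1}$, at most one of them is $w$, whence $\mathcal{L}(u_0)+\mathcal{L}(u_{n-1})\ge\xi$. Plugging this into the displayed inequality finishes the proof.
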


In next result, we give a necessary and sufficient condition to achieve the lower bound for the hamiltonian chromatic number of block graphs given in Theorem \ref{hc:lower}.

\begin{theorem}\label{hc:main} Let $G$ be a block graph of order $n$ and $\omega$, $\xi$ and $\mathcal{L}(G)$ are defined as earlier. Then
\begin{equation}\label{eqn:main}
hc(G) = (n-1)(n-\omega)-2\L(G)+\xi
\end{equation}
holds if and only if there exists an ordering \{$u_{0},u_{1},...,u_{n-1}$\} of the vertices of $G$, with $\L(u_{0})$ = 0, $\L(u_{n-1})$ = $\xi$ when $|C_{D}(G)|$ = 1 and $\L(u_{0})$ = $\L(u_{n-1})$ = 0 when $|C_{D}(G)| \geq 2$, such that
\begin{equation}\label{eqn:Dij}
D(u_{i},u_{j}) \geq \displaystyle\sum_{t=i}^{j-1}(\L(u_{t})+\L(u_{t+1}))-(j-i)(n-\omega)+(n-1), 0\leq i < j \leq n-1.
\end{equation}
Moreover, under this condition the mapping $c$ defined by
\begin{equation} \label{f0}
c(u_{0}) = 0
\end{equation}
\begin{equation}\label{f1}
c(u_{i+1}) = c(u_{i}) + n - \omega - \L(u_{i+1})-\L(u_{i}), 0 \leq i \leq n-2.
\end{equation}
is an optimal hamiltonian coloring of $G$.
\end{theorem}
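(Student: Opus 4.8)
The plan is to prove the equivalence in two directions, and then verify that the explicit coloring $c$ defined by \eqref{f0}--\eqref{f1} attains the bound whenever \eqref{eqn:Dij} holds.

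\medskip

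\textbf{Sufficiency and optimality.} Suppose an ordering $\{u_0,\dots,u_{n-1}\}$ with the stated endpoint conditions on $\mathcal L$ exists and satisfies \eqref{eqn:Dij}. First I would observe that $c(u_i)$ is strictly increasing: by \eqref{f1} the gap $c(u_{i+1})-c(u_i) = (n-\omega)-\mathcal L(u_{i+1})-\mathcal L(u_i)$, and since $\mathcal L(u)\le \mathcal D/2$-ish, more concretely since every vertex has $\mathcal L(u)\le \lceil(n-\omega)/2\rceil$ or so, one needs $\mathcal L(u_{i+1})+\mathcal L(u_i) < n-\omega$; I would derive this from the fact that $D(u_i,u_{i+1})\le n-1$ always (a path has at most $n-1$ edges) together with \eqref{eqn:duv}, which gives $\mathcal L(u_i)+\mathcal L(u_{i+1}) = D(u_i,u_{i+1}) + 2\phi_D(u_i,u_{i+1}) - \delta_D(u_i,u_{i+1}) \le (n-1) + 2\phi_D - \delta_D$; a short case analysis on $\omega$ closes the strictness. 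Next, telescoping \eqref{f1} gives the closed form
\begin{equation*}
c(u_j)-c(u_i) = (j-i)(n-\omega) - \sum_{t=i}^{j-1}\bigl(\mathcal L(u_t)+\mathcal L(u_{t+1})\bigr), \qquad 0\le i<j\le n-1.
\end{equation*}
Rearranging \eqref{eqn:Dij} then reads exactly $D(u_i,u_j) + \bigl(c(u_j)-c(u_i)\bigr) \ge n-1$, i.e. $c$ is a hamiltonian coloring. Its span is $c(u_{n-1}) = (n-1)(n-\omega) - \sum_{t=0}^{n-2}(\mathcal L(u_t)+\mathcal L(u_{t+1}))$. The sum $\sum_{t=0}^{n-2}(\mathcal L(u_t)+\mathcal L(u_{t+1})) = 2\mathcal L(G) - \mathcal L(u_0) - \mathcal L(u_{n-1})$, which equals $2\mathcal L(G)-\xi$ in the case $|C_D(G)|=1$ (since $\mathcal L(u_0)=0$, $\mathcal L(u_{n-1})=\xi$) and $2\mathcal L(G)$ when $|C_D(G)|\ge 2$ (recall $\xi=0$ there). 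In both cases $c(u_{n-1}) = (n-1)(n-\omega) - 2\mathcal L(G) + \xi$, matching the lower bound in Theorem \ref{hc:lower}; hence $c$ is optimal and \eqref{eqn:main} holds.

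\medskip

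\textbf{Necessity.} Conversely, assume \eqref{eqn:main} holds and fix an optimal hamiltonian coloring $c^\ast$ with the induced ordering $\{u_0,\dots,u_{n-1}\}$, $0=c^\ast(u_0)<\cdots<c^\ast(u_{n-1}) = (n-1)(n-\omega)-2\mathcal L(G)+\xi$. The hamiltonian condition applied to consecutive pairs gives $c^\ast(u_{i+1})-c^\ast(u_i) \ge (n-1) - D(u_i,u_{i+1}) \ge (n-1) - \bigl(\mathcal L(u_i)+\mathcal L(u_{i+1}) + 2\phi_D(u_i,u_{i+1}) - \delta_D(u_i,u_{i+1})\bigr)$ via \eqref{eqn:duv}. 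Summing over $i=0,\dots,n-2$ and using that the total telescopes to the span, one gets
\begin{equation*}
(n-1)(n-\omega)-2\mathcal L(G)+\xi = c^\ast(u_{n-1}) \ge (n-1)^2 - \Bigl(2\mathcal L(G) - \mathcal L(u_0) - \mathcal L(u_{n-1}) + \text{(correction terms)}\Bigr).
\end{equation*}
The main work here is bookkeeping the $\phi_D$ and $\delta_D$ correction terms: I expect that forcing equality throughout pins down $\phi_D(u_i,u_{i+1})=0$ for all consecutive pairs (so by Lemma \ref{lem:phi}(b) consecutive vertices lie in different/opposite branches), pins the endpoint values of $\mathcal L$ to the claimed values (this is where $\xi = \min\{|V(B_i)|-1\}$ enters, since $\mathcal L(u_{n-1})$ cannot exceed a detour-level-$\xi$ vertex without violating the count), and forces each consecutive inequality to be tight — which upgrades the individual inequalities $D(u_i,u_{i+1}) \le n-1$ etc. to the family \eqref{eqn:Dij} for \emph{all} pairs $i<j$ by re-summing the now-tight consecutive relations and invoking the hamiltonian condition on the pair $(u_i,u_j)$.

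\medskip

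\textbf{Main obstacle.} The delicate point is the necessity direction: extracting \eqref{eqn:Dij} for all pairs $i<j$ (not just consecutive ones) from the single scalar equality $c^\ast(u_{n-1}) = $ lower bound. The argument must show that equality in the summed bound forces equality in \emph{every} summand, and then that the resulting tight consecutive gaps, combined with $D(u_i,u_j)+c^\ast(u_j)-c^\ast(u_i)\ge n-1$, reassemble into \eqref{eqn:Dij}; handling the two regimes $\omega=1$ and $\omega\ge2$ uniformly (the $\xi$ term and the $\delta_D$ term behave differently) is the part that needs care. I would model this on the telescoping/"ordering" technique of \cite{Bantva3,Bantva4} referenced by the author.
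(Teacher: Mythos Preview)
Your proposal is correct and follows essentially the same approach as the paper: telescope \eqref{f1} to get $c(u_j)-c(u_i)=(j-i)(n-\omega)-\sum_{t=i}^{j-1}(\mathcal L(u_t)+\mathcal L(u_{t+1}))$, then in sufficiency combine this with \eqref{eqn:Dij} to verify the hamiltonian condition and compute the span, while in necessity force all consecutive gaps to be tight from the span equality and then feed the telescoped sum back into the hamiltonian inequality for the pair $(u_i,u_j)$ to recover \eqref{eqn:Dij}. One remark: your separate argument for strict monotonicity of $c$ is unnecessary, since applying \eqref{eqn:Dij} with $j=i+1$ already gives $c(u_{i+1})-c(u_i)=(n-\omega)-\mathcal L(u_i)-\mathcal L(u_{i+1})\ge (n-1)-D(u_i,u_{i+1})\ge 0$, and strictness follows because a block graph with three or more branches has no hamiltonian path between any two vertices.
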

\begin{proof}\textsf{Necessity:} Suppose that \eqref{eqn:main} holds. Let $c$ be an optimal hamiltonian coloring of $G$ then $c$ induces an ordering of vertices say, $0 = c(u_{0}) < c(u_{1}) < ... < c(u_{n-1})$. The span of $c$ is $\span(c)$ = $hc(G)$ = $(n-1)(n-\om)-2\L(G)+\xi$. Note that this is possible if equality holds in \eqref{eqn:hcdef} together with (a) $\phi_{D}(u_{i},u_{i+1}) = \delta_{D}(u_{i},u_{i+1}) = 0$ and $\L(u_{0})$ = 0, $\L(u_{n-1})$ = $\xi$ when $|C_{D}(G)|$ = 1 and (b) $\phi_{D}(u_{i},u_{i+1})$ = 0, $\delta_{D}(u_{i},u_{i+1})$ = $\om-1$ and $\L(u_{0}) = \L(u_{n-1}) = 0$ when $|C_{D}(G)| \geq 2$ in \eqref{eqn:duv}. This turn the definition of hamiltonian coloring $c(u_{i+1})-c(u_{i})$ = $n-1-D(u_{i},u_{i+1})$ to $c(u_{0})$ = 0 and $c(u_{i+1})$ = $c(u_{i}) + n - \om - \L(u_{i+1}) - \L(u_{i}), 0 \leq i \leq n-2$. The span of $c$ is right-hand side of \eqref{hc:main} and hence $c$ is an optimal hamiltonian coloring.

Moreover, for any two vertices $u_{i}$ and $u_{j}$ (without loss of generality, assume $j > i$), summing the latter definition for index $i$ to $j$, we have
\be\label{eqn:cij} c(u_{j}) - c(u_{i}) = \displaystyle\sum_{t=i}^{j-1}\(n-\om-\L(u_{t})-\L(u_{t+1})\) \ee
Now $c$ is a hamiltonian coloring so that $c(u_{j})-c(u_{i}) \geq n-1-D(u_{i},u_{j})$. Substituting this in \eqref{eqn:cij}, we get
\begin{equation}
D(u_{i},u_{j}) \geq \displaystyle\sum_{t=i}^{j-1}(\L(u_{t})+\L(u_{t+1}))-(j-i)(n-\omega)+(n-1), 0\leq i < j \leq n-1.
\end{equation}
\textsf{Sufficiency:} Suppose that an ordering \{$u_{0},u_{1},...,u_{n-1}$\} of vertices of $G$ satisfies \eqref{eqn:Dij}, and $c$ is defined by \eqref{f0} and \eqref{f1} together with $\L(u_{0})$ = 0 and $\L(u_{n-1}) = \xi$ when $|C_{D}(G)|$ = 1 and $\L(u_{0}) = \L(u_{n-1}) = 0$ when $|C_{D}(G)| \geq 2$. Note that it is enough to prove that $c$ is a hamiltonian coloring with span equal to the right-hand side of \eqref{eqn:main}. Let $u_{i}$ and $u_{j}$ ($0 \leq i < j \leq n-1$) be two arbitrary vertices then by \eqref{f1} and using \eqref{eqn:Dij}, we have
\bean c(u_{j}) - c(u_{i}) & = & (j-i)(n-\om) - \displaystyle\sum_{t=i}^{j-1}\(\L(u_{t})+\L(u_{t+1})\) \\
& \geq & n-1-D(u_{i},u_{j})
\eean
hence $c$ is a hamiltonian coloring.

The span of $c$ is given by
\bean
\span(c) & = & c(u_{n-1})-c(u_{0}) \\
& = & \displaystyle\sum_{t=0}^{n-2}\(c(u_{t})-c(u_{t+1})\) \\
& = & \displaystyle\sum_{t=0}^{n-2}\(n-\om-\L(u_{t+1})-\L(u_{t})\) \\
& = & (n-1)(n-\om)-2\displaystyle\sum_{u \in V(G)}\L(u) + \L(u_{0})+\L(u_{n-1}) \\
& = & (n-1)(n-\om)-2\L(G)+\xi.
\eean
\end{proof}

The following results gives sufficient condition with optimal hamiltonian coloring for the equality in (\ref{eqn:lower}).

\begin{theorem}\label{hc:c1} Let $G$ be a block graph of order $n$ and, $\omega$, $\xi$ and $\mathcal{L}(G)$ are defined as earlier. Then
\begin{equation}\label{eqn:c1}
hc(G) = (n-1)(n-\omega) - 2 \mathcal{L}(G) + \xi,
\end{equation}
if there exists an ordering \{$u_{0}$, $u_{1}$,...,$u_{n-1}$\} with 0 = $c(u_{0}) < c(u_{1}) < ... < c(u_{n-1})$ of vertices of block graph $G$ such that
\begin{enumerate}[\rm (a)]
\item $\mathcal{L}(u_{0})$ = 0, $\mathcal{L}(u_{n-1})$ = $\xi$ when $|C_{D}(G)|$ = 1 and $\mathcal{L}(u_{0})$ = $\mathcal{L}(u_{n-1})$ = 0 when $|C_{D}(G)| \geq 2$,
\item $u_{i}$ and $u_{i+1}$ are in different branches when $|C_{D}(G)| = 1$ and opposite branches when $|C_{D}(G)| \geq 2$,
\item $D(u_{i}, u_{i+1}) \leq n/2$, for $0 \leq i \leq n-2$.
\end{enumerate}
Moreover, under these conditions the mapping $c$ defined by \eqref{f0} and \eqref{f1} is an optimal hamiltonian coloring of $G$.
\end{theorem}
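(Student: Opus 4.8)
The plan is to reduce this to Theorem~\ref{hc:main}. That theorem already supplies the equality \eqref{eqn:c1} together with the optimality of the coloring $c$ from \eqref{f0}--\eqref{f1}, so it is enough to check that an ordering satisfying (a)--(c) meets the hypotheses of Theorem~\ref{hc:main}: the boundary conditions on $\L$, which are exactly condition (a), and inequality \eqref{eqn:Dij}. Everything therefore comes down to verifying \eqref{eqn:Dij} from (b) and (c).

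First I would rewrite the right-hand side of \eqref{eqn:Dij}. By (b), the consecutive vertices $u_t, u_{t+1}$ lie in different branches when $\om = 1$ and in opposite branches when $\om \geq 2$; hence Lemma~\ref{lem:phi}(b),(c) give $\phi_D(u_t,u_{t+1}) = 0$ and $\delta_D(u_t,u_{t+1}) = \om - 1$ (the value $\om-1$ covering the case $\om=1$ automatically), and moreover (b) is precisely the case in which \eqref{eqn:duv} holds with equality, so
\be\label{eqn:consec} D(u_t,u_{t+1}) = \L(u_t) + \L(u_{t+1}) + (\om - 1), \quad 0 \leq t \leq n-2. \ee
Summing \eqref{eqn:consec} over $t = i, \dots, j-1$ and substituting into the right-hand side of \eqref{eqn:Dij}, the contributions $-(j-i)(\om-1)$ and $-(j-i)(n-\om)$ combine into $-(j-i)(n-1)$, so \eqref{eqn:Dij} becomes equivalent to
\be\label{eqn:equiv} D(u_i,u_j) \geq \displaystyle\sum_{t=i}^{j-1} D(u_t,u_{t+1}) - (j-i-1)(n-1), \quad 0 \leq i < j \leq n-1. \ee

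For $j = i+1$ this holds with equality. For $j \geq i+2$, putting $m = j-i \geq 2$, condition (c) gives $\sum_{t=i}^{j-1} D(u_t,u_{t+1}) \leq mn/2$, while $D(u_i,u_j) \geq 1$ since $u_i \neq u_j$; so \eqref{eqn:equiv} follows once one checks the elementary inequality $mn/2 - (m-1)(n-1) \leq 1$, which I would do by rewriting the left-hand side as $1 - (n-2)(m-2)/2$ and noting $(n-2)(m-2)/2 \geq 0$ for $m \geq 2$ and $n \geq 3$. (Here a block graph with three or more branches has $n \geq 4$, so in particular $\diam(G) \geq 2$ and Lemma~\ref{lem:phi} applies.) With \eqref{eqn:Dij} established, Theorem~\ref{hc:main} delivers \eqref{eqn:c1} and the optimality of $c$.

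I do not expect a genuine obstacle here; the argument is essentially bookkeeping. The points that need the most care are the uniform handling of the $\om = 1$ and $\om \geq 2$ cases through $\delta_D = \om - 1$ in \eqref{eqn:consec}, the telescoping simplification turning \eqref{eqn:Dij} into \eqref{eqn:equiv}, and making sure the standing assumption of three or more branches is in force so that $\diam(G) \geq 2$ and Lemma~\ref{lem:phi} can be invoked.
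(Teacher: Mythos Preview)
Your proposal is correct and follows essentially the same route as the paper: reduce to Theorem~\ref{hc:main} and verify \eqref{eqn:Dij} by bounding the right-hand side above by $1\leq D(u_i,u_j)$ using condition~(c). The only cosmetic difference is that you first rewrite the sum via $D(u_t,u_{t+1})=\L(u_t)+\L(u_{t+1})+(\omega-1)$ before applying the bound $D(u_t,u_{t+1})\leq n/2$, whereas the paper applies it directly as $\L(u_t)+\L(u_{t+1})\leq n/2-\omega+1$; the arithmetic that follows is the same.
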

\begin{proof} Suppose (a), (b) and (c) hold for an ordering \{$u_{0}$, $u_{1}$,...,$u_{n-1}$\} of the vertices of $G$ and $c$ is defined by (\ref{f0}) and (\ref{f1}). By Theorem \ref{hc:main}, it is enough to prove that an ordering \{$u_{0}$, $u_{1}$,...,$u_{n-1}$\} satisfies condition (\ref{eqn:Dij}).

Let $u_{i},u_{j} \in V(G), 0 \leq i < j \leq n-1$. Without loss of generality, we assume that $j-i \geq 2$ and for simplicity let the right-hand side of (\ref{eqn:Dij}) is $\mathcal{S}_{i,j}$. Note that $\L(u_{t})+\L(u_{t+1}) \leq n/2-\omega+1$ as $D(u_{t}, u_{t+1}) \leq n/2$ for $0 \leq t \leq n-2$. Hence, we have
$$
\begin{array}{lll}
\mathcal{S}_{i,j} & = & \displaystyle\sum_{t=i}^{j-1}(\L(u_{t})+\L(u_{t+1}))-(j-i)(n-\omega)+(n-1) \\
& \leq & (j-i)(n/2-\omega+1)-(j-i)(n-\omega)+(n-1) \\
& = & (j-1)(1-n/2)+(n-1) \\
& \leq & 2(1-n/2)+(n-1) \\
& = & 1 = D(u_{i},u_{j}).
\end{array}
$$
which completes the proof.
\end{proof}
\begin{theorem}\label{hc:c2} Let $G$ be a block graph of order $n$, detour diameter $\mathcal{D}$ and $\omega$, $\xi$ and $\mathcal{L}(G)$ are defined as earlier. Then
\begin{equation}\label{eqn:c2}
hc(G) = (n-1)(n-\omega) - 2 \mathcal{L}(G) + \xi
\end{equation}
if there exists an ordering \{$u_{0}$, $u_{1}$,...,$u_{n-1}$\} with 0 = $c(u_{0}) < c(u_{1}) < ... < c(u_{n-1})$ of vertices of block graph $G$ such that
\begin{enumerate}[\rm (a)]
\item $\mathcal{L}(u_{0})$ = 0, $\mathcal{L}(u_{n-1})$ = $\xi$ when $|C_{D}(G)|$=1 and $\mathcal{L}(u_{0})$ = $\mathcal{L}(u_{n-1})$ = 0 when $|C_{D}(G)| \geq 2$,
\item $u_{i}$ and $u_{i+1}$ are in different branches when $|C_{D}(G)| = 1$ and opposite branches when $|C_{D}(G)| \geq 2$,
\item $\mathcal{D} \leq n/2$.
\end{enumerate}
Moreover, under these conditions the mapping $c$ defined by \eqref{f0} and \eqref{f1} is an optimal hamiltonian coloring of $G$.
\end{theorem}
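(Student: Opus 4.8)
The plan is to reduce Theorem \ref{hc:c2} to Theorem \ref{hc:c1}, since the hypotheses are nearly identical except that condition (c) has been strengthened from $D(u_i,u_{i+1}) \leq n/2$ for consecutive pairs to the global bound $\mathcal{D} \leq n/2$ on the detour diameter. First I would observe that $\mathcal{D} = \max\{D(u,v) : u,v \in V(G)\}$ by definition, so in particular $D(u_i,u_{i+1}) \leq \mathcal{D} \leq n/2$ for every $i$ with $0 \leq i \leq n-2$. Hence condition (c) of Theorem \ref{hc:c1} is an immediate consequence of condition (c) of the present theorem, while conditions (a) and (b) are literally the same in both statements.

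With that observation in hand, the argument is essentially one line: the ordering $\{u_0, u_1, \ldots, u_{n-1}\}$ satisfies all three hypotheses of Theorem \ref{hc:c1}, so by that theorem the mapping $c$ defined by \eqref{f0} and \eqref{f1} is an optimal hamiltonian coloring of $G$ and \eqref{eqn:c2} holds. There is no genuine obstacle here; the only thing to be careful about is to state explicitly that $D(u_i,u_{i+1}) \leq \mathcal{D}$ follows from the definition of detour diameter so that the reader sees the implication (c) $\Rightarrow$ (c) of Theorem \ref{hc:c1}, and to note that the coloring formula and span computation are inherited verbatim from the proof of Theorem \ref{hc:c1} (ultimately from Theorem \ref{hc:main}).

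\begin{proof} Suppose (a), (b) and (c) hold for an ordering $\{u_{0}, u_{1},\ldots,u_{n-1}\}$ of the vertices of $G$ and $c$ is defined by \eqref{f0} and \eqref{f1}. By the definition of the detour diameter, $D(u_{i}, u_{i+1}) \leq \mathcal{D}$ for every $i$ with $0 \leq i \leq n-2$, and hence condition (c) here gives $D(u_{i}, u_{i+1}) \leq \mathcal{D} \leq n/2$ for $0 \leq i \leq n-2$. Thus the ordering $\{u_{0}, u_{1},\ldots,u_{n-1}\}$ satisfies conditions (a), (b) and (c) of Theorem \ref{hc:c1}. Therefore, by Theorem \ref{hc:c1}, the mapping $c$ defined by \eqref{f0} and \eqref{f1} is an optimal hamiltonian coloring of $G$ and \eqref{eqn:c2} holds. \end{proof}
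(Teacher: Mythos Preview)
Your proof is correct and follows exactly the same approach as the paper: the paper's proof is the single line ``The proof is straight forward as $D(u,v) \leq \mathcal{D} \leq n/2$, for any $u, v \in V(G)$,'' which is precisely your reduction to Theorem~\ref{hc:c1}.
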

\begin{proof} The proof is straight forward as $D(u,v) \leq \mathcal{D} \leq n/2$, for any $u, v \in V(G)$.
\end{proof}

We say a block graph $G$ is star shaped block graph if there exist an ordering \{$u_{0},u_{1},...,u_{n-1}$\} of vertices of $G$ such that $u_{i}$ and $u_{i+1}$, $0 \leq i \leq n-2$ are in different branches when $|C_{D}(G)| = 1$ and in opposite branches when $|C_{D}(G)| \geq 2$ unless one of them is central vertex. If max\{$D(u,v)$ : $u,v \in V(G)$\} $\leq n/2$ then a block graph $G$ is called a maximum detour distance bound $n/2$ or $DB(n/2)$ block graph. We denote a star shaped block graph with maximum detour distance bound $n/2$ by $SDB(n/2)$ block graph. Then from Theorem \ref{hc:c2} note that the hamiltonian chromatic number of any $SDB(n/2)$ block graphs is equal to a lower bound given by right-hand side of \eqref{eqn:lower}. The following algorithm gives an optimal hamiltonian coloring of $SDB(n/2)$ block graphs.

\begin{algorithm} An optimal hamiltonian coloring of $SDB(n/2)$ block graphs.
\end{algorithm}
\noindent\textbf{Input:} A block graph $G$ of order $n$ which is $SDB(n/2)$. \\
\noindent\textbf{Idea:} Find an ordering of vertices of block graph $G$ which satisfies Theorem \ref{hc:c2} and coloring defined by \eqref{f0}-\eqref{f1} is a hamiltonian coloring whose span is right-hand side of \eqref{eqn:c2}. \\
\noindent\textbf{Initialization:} Start with a central vertex $w$. \\
\noindent\textbf{Iteration:} Defined $c$ : $V(G)$ $\rightarrow$ $\{0,1,2,...\}$ as follows. \\
\noindent\textbf{Step-1:} Let $u_{0}$ = $w \in V(C_{D}(G))$  and $u_{n-1} \in N(w)$ such that the block $B$ for which $u_{n-1} \in B$ is the smallest block attached to central vertex $w$ when $|C_{D}(G)|$ = 1 and $u_{0}, u_{n-1} \in V(C_{D}(G))$ when $|C_{D}(G)| \geq 2$. \\
\noindent\textbf{Step-2:} Choose $u_{i}$ and $u_{i+1}$, where $1 \leq i \leq n-2$ in different branches when $|C_{D}(G)|$ = 1 and in opposite branches when $|C_{D}(G)| \geq 2$. Continue this process till all the vertices get order. Note that such ordering is possible as $G$ is star shaped. Then the ordering \{$u_{0}$, $u_{1}$,...,$u_{n-1}$\} satisfies conditions of Theorem \ref{hc:c2}. \\
\noindent\textbf{Step-3:} Defines $c$ : $V(G)$ $\rightarrow$ \{0,1,2,...\} by $c(u_{0})$ = 0 and $c(u_{i+1})$ = $c(u_{i})+n-\omega-\L(u_{i})-\L(u_{i+1}), 0 \leq i \leq n-2$. \\
\noindent\textbf{Output:} The span of $c$ is span($c$) = $c(u_{n-1})$ = $c(u_{0})+(n-1)(n-\omega)-2\(\displaystyle\sum_{u \in V(G)} \L(u)\)+\xi$ = $(n-1)(n-\omega)-2\L(G)+\xi$.

\section{Hamiltonian chromatic number of some block graphs}\label{sec:illu}

\indent\indent In this section, we determine the hamiltonian chromatic number of some block graphs using Theorem \ref{hc:lower} to \ref{hc:c2}. We continue to use the terminology and notation defined in previous section.

\subsection{hc(G) for level-wise regular block graphs}

Let $k_{i} \geq 1$ and $m_{i} \geq 2$, $1 \leq i \leq r$ be positive integers. Setting $C_{D}(G)$ = \{$w$\} and $C_{D}(G)$ = \{$w^{0},w^{1},...,w^{m}$\}. If $C_{D}(G)$ = \{$w$\} then attach $k_{1}$ blocks of size $m_{1}+1$ to $w$ and then again attach $k_{2}$ blocks of size $m_{2}+1$ to each end vertices of previously attached blocks. Continuing in this way, finally attach $k_{r}$ blocks of size $m_{r}+1$ to each end vertices of previously attached blocks at $r-1$ step. We denote this block graph by $G^{1}_{k_{1}m_{1},...,k_{r}m_{r}}$. If $C_{D}(G)$ = \{$w^{0},w^{1},...,w^{m}$\} then attach $k_{1}$ blocks of size $m_{1}+1$ to each $w^{i}$, $i = 0,1,2,...,m$ and then again attach $k_{2}$ blocks of size $m_{2}+1$ to the end vertices of previously attached blocks. Continuing in this way, finally attach $k_{r}$ blocks of size $m_{r}+1$ to each end vertices of previously attached blocks at $r-1$ step. We denote this block graph by $G^{m+1}_{k_{1}m_{1},...,k_{r}m_{r}}$. The block graphs $G^{1}_{k_{1}m_{1},...,k_{r}m_{r}}$ and $G^{m+1}_{k_{1}m_{1},...,k_{r}m_{r}}$ are known as level-wise regular block graphs. Note that diam$\(G^{1}_{k_{1}m_{1},...,k_{r}m_{r}}\)$ = $2r$ and diam$\(G^{m+1}_{k_{1}m_{1},...,k_{r}m_{r}}\)$ = $2r+1$. In our subsequent discussion for simplicity we denote $G^{1}_{k_{1}m_{1},...,k_{r}m_{r}}$ by $G^{1}$ and $G^{m+1}_{k_{1}m_{1},...,k_{r}m_{r}}$ by $G^{m+1}$ only. Notice that in the present work for $G^{1}$ we will not allow $k_{1}$ = 1 as in this case central vertex of block graph has one branch only.

\begin{theorem}\label{thm:level} Let $m, k_{i} \geq 1$ and $m_{i} \geq 2$ where $1 \leq i \leq r$ be integers. Then
\be\label{hc:level1}
hc(G^{1}) = \[\displaystyle\sum_{i=1}^{r}\(\displaystyle\prod_{1 \leq j \leq i}k_{i}m_{i}\)\]^{2}-2\[\displaystyle\sum_{i=1}^{r}\(\(\displaystyle\prod_{1 \leq j \leq i}k_{i}m_{i}\)\(\displaystyle\sum_{j=1}^{i}m_{j}\)\)\]+m_{1}
\ee
and
\begin{equation*}
hc(G^{m+1}) = \[m+(m+1)\displaystyle\sum_{i=1}^{r}\(\displaystyle\prod_{1 \leq j \leq i}k_{i}m_{i}\)\]\[(m+1)\displaystyle\sum_{i=1}^{r}\(\displaystyle\prod_{1 \leq j \leq i}k_{i}m_{i}\)\]
\end{equation*}
\be\label{hc:level2}
\hspace{5cm}- 2\[\displaystyle\sum_{i=1}^{r}(m+1)\(\(\displaystyle\prod_{1 \leq j \leq i}k_{i}m_{i}\)\(\displaystyle\sum_{j=1}^{i}m_{j}\)\)\]
\ee
\end{theorem}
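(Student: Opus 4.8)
The plan is to verify that both level-wise regular block graphs $G^{1}$ and $G^{m+1}$ are in fact $SDB(n/2)$ block graphs, so that Theorem \ref{hc:c2} applies and the formula \eqref{eqn:main} (equivalently the right-hand side of \eqref{eqn:lower}) gives the exact value of $hc(G)$. Then the bulk of the work is to compute, for each family, the three quantities $n$, $\omega$, $\xi$, and $\mathcal{L}(G)$ appearing on the right-hand side of \eqref{eqn:main}, and to substitute.

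First I would set up the notation. For $G^{1}$, the detour center is a single vertex $w$, so $\omega = 1$ and $\xi = \min\{m_{i}\text{-like terms}\}$; since $k_{1}$ blocks of size $m_{1}+1$ are attached at $w$, the blocks at the central vertex each contribute $m_{1}$, so $\xi = m_{1}$. Counting vertices by level: at level $i$ (for $1 \le i \le r$) the number of vertices is $\prod_{1\le j\le i} k_{j}m_{j}$ (this is the product notation used in the statement, with the understanding that the index should range over $j$), since each of the $\prod_{1\le j\le i-1}k_j m_j$ end-vertices of level $i-1$ spawns $k_i$ new blocks each contributing $m_i$ new vertices. Hence $n = 1 + \sum_{i=1}^{r}\prod_{1\le j\le i}k_j m_j$. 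For the detour level function: a vertex at level $i$ has $\mathcal{L}(u) = \sum_{j=1}^{i} m_j$ (the detour distance from $w$ out to that vertex runs through $i$ cliques of sizes $m_1+1,\dots,m_i+1$, each contributing $m_j$ to the detour length). Therefore
\[
\mathcal{L}(G^{1}) = \sum_{i=1}^{r}\Bigl(\prod_{1\le j\le i}k_j m_j\Bigr)\Bigl(\sum_{j=1}^{i}m_j\Bigr).
\]
Plugging $\omega = 1$, $\xi = m_1$, $n-1 = \sum_{i=1}^{r}\prod_{1\le j\le i}k_j m_j$, and $n - \omega = n-1$ into \eqref{eqn:main} gives $(n-1)^2 - 2\mathcal{L}(G^1) + m_1$, which is exactly \eqref{hc:level1}. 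The analogous count for $G^{m+1}$ has $\omega = m+1$ central vertices, so $\xi = 0$; the number of level-$i$ vertices is $(m+1)\prod_{1\le j\le i}k_j m_j$, giving $n = (m+1) + (m+1)\sum_{i=1}^{r}\prod_{1\le j\le i}k_j m_j$, so $n - 1 = m + (m+1)\sum_{i=1}^{r}\prod_{1\le j\le i}k_j m_j$ and $n - \omega = (m+1)\sum_{i=1}^{r}\prod_{1\le j\le i}k_j m_j$; the detour level of a level-$i$ vertex is still $\sum_{j=1}^i m_j$, so $\mathcal{L}(G^{m+1}) = (m+1)\sum_{i=1}^{r}\bigl(\prod_{1\le j\le i}k_j m_j\bigr)\bigl(\sum_{j=1}^i m_j\bigr)$, and substitution into \eqref{eqn:main} with $\xi = 0$ yields \eqref{hc:level2}.

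The remaining, and in my view the main, obstacle is the hypothesis $\mathcal{D} \le n/2$ needed to invoke Theorem \ref{hc:c2}: one must confirm that the detour diameter of a level-wise regular block graph does not exceed half its order. Here I would argue that the longest path between two leaves in opposite branches passes through at most two full root-to-leaf chains plus (for $G^{m+1}$) the central clique, so $\mathcal{D} \le 2\sum_{i=1}^{r} m_i + (\omega - 1)$; since each $k_i \ge 1$ and $m_i \ge 2$ and (for $G^1$) $k_1 \ge 2$, the order $n$ grows at least geometrically relative to the depth $r$, so $2\mathcal{D} \le n$ holds — though I suspect this inequality needs either a small side-condition or a careful case check when all $k_i = 1$ and the $m_i$ are as small as possible, which is where a genuine computation is unavoidable. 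Finally I would note that the star-shaped property is automatic: at a single central vertex there are $k_1 \ge 2$ branches (so consecutive vertices can always be placed in different branches, following the construction used in the algorithm), and with $\omega \ge 2$ central vertices the $\omega$ central cliques give opposite branches, so the ordering required by Theorem \ref{hc:c2}(b) exists. Combining the $SDB(n/2)$ verification with the level counts above completes the proof.
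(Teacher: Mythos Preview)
Your computations of $n$, $\omega$, $\xi$ and $\mathcal{L}(G)$ for each of $G^{1}$ and $G^{m+1}$ are exactly what the paper does, and the substitution into \eqref{eqn:main} is the same. The divergence is in how equality is justified.

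The paper does \emph{not} go through Theorem~\ref{hc:c2}. Instead it explicitly constructs an ordering $u_{0},u_{1},\dots,u_{n-1}$ (a mixed-radix indexing of the vertices so that consecutive vertices lie in different branches of $G^{1}$, respectively opposite branches of $G^{m+1}$) and then invokes Theorem~\ref{hc:c1}, whose hypothesis (c) is only the pairwise bound $D(u_{i},u_{i+1})\le n/2$, not the global bound $\mathcal{D}\le n/2$. The construction of this ordering is the substantive part of the paper's argument, and it is precisely the part you wave away with ``following the construction used in the algorithm.''

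Your proposed route through Theorem~\ref{hc:c2} has a real gap: the inequality $\mathcal{D}\le n/2$ is strictly stronger than what is needed and does not hold throughout the stated parameter range. For instance, take $G^{1}$ with $r=1$, $k_{1}=3$, $m_{1}=2$ (three triangles sharing a vertex). Then $n=7$, while two vertices in different triangles have detour distance $4$, so $\mathcal{D}=4>n/2=7/2$. Thus Theorem~\ref{hc:c2} cannot be invoked here, and your heuristic ``$n$ grows at least geometrically relative to the depth'' breaks down exactly when $r$ is small. The paper's explicit ordering is what allows one to sidestep the global diameter bound; without it (or a direct verification of \eqref{eqn:Dij} via Theorem~\ref{hc:main}), your argument does not cover all cases in the theorem.
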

\begin{proof} \textbf{(1) For $G^{1}$:}~~The order and the total level of $G^{1}$ are given by
\be\label{level:p1} n := 1+\displaystyle\sum_{i=1}^{r}\(\displaystyle\prod_{1 \leq j \leq i}k_{i}m_{i}\) \ee

\be\label{level:LG1} \L(G^{1}) := \displaystyle\sum_{i=1}^{r}\(\(\displaystyle\prod_{1 \leq j \leq i}k_{i}m_{i}\)\(\displaystyle\sum_{j=1}^{i}m_{j}\)\) \ee

Substituting \eqref{level:p1} and \eqref{level:LG1} into \eqref{eqn:lower} gives the right-hand side of \eqref{hc:level1}

Now we give systematic ordering of vertices of $G^{1}$ such that it satisfies conditions of Theorem \ref{hc:c1}. Note that $G^{1}$ has a unique central vertex $w$. Now denote the $k_{1}m_{1}$ children of the $w$ by $w_{0}$, $w_{1}$,...,$w_{k_{1}m_{1}-1}$ such that any $k_{1}$ consecutive vertices are in different blocks with $w_{a}$ and $w_{a+k_{1}}$ are in the same block for $0 \leq a \leq k_{1}m_{1}-k_{1}-1$. Denote the $k_{2}m_{2}$ children of each $w_{i}$ by $w_{i0}$, $w_{i1}$,...,$w_{i(k_{2}m_{2}-1)}$, $0 \leq i \leq k_{1}m_{1}-1$ such that any $k_{2}$ consecutive vertices are in different blocks with $w_{ia}$ and $w_{i(a+k_{2})}$ are in the same block for $0 \leq a \leq k_{2}m_{2}-k_{2}-1$. Inductively, denote the $k_{l+1}m_{l+1}$ children of $w_{i_{1}i_{2}...i_{l}}$ ($0 \leq i_{1} \leq k_{1}m_{1}-1, 0 \leq i_{2} \leq k_{2}m_{2}-1,...,0 \leq i_{l} \leq k_{l}m_{l}-1$) by $w_{i_{1}i_{2}...i_{l}i_{l+1}}$ where $0 \leq i_{l+1} \leq k_{l+1}m_{l+1}-1$ such that any $k_{l+1}$ consecutive vertices are in different blocks with $w_{i_{1}i_{2}...i_{l}a}$ and $w_{i_{1}i_{2}...i_{l}(a+k_{l+1})}$ are in the same block for $0 \leq a \leq k_{l+1}m_{l+1}-k_{l+1}-1$. Continue this until all vertices of $G^{1}$ are indexed in this way. We then rename the vertices of $G^{1}$ as follows:

Let $u_{0}$ = $w$ and for $1 \leq j \leq n-1$, let

$u_{j} := w_{i_{1}i_{2}...i_{l}}$,

where $j=1+i_{1}+i_{2}(k_{1}m_{1})+...+i_{l}\(\displaystyle\prod_{i=1}^{l-1}k_{i}m_{i}\)+\displaystyle\sum_{l+1 \leq t \leq r}\(\displaystyle\prod_{i=1}^{t}k_{i}m_{i}\)$.

Note that $u_{n-1}$ is adjacent to $w$ and for $1 \leq i \leq n-2$, $u_{i}$ and $u_{i+1}$ are in different branches so that $\phi_{D}(u_{i},u_{i+1})$ = 0. Moreover, for $0 \leq i \leq n-1$ we have $D(u_{i},u_{i+1}) \leq n/2$ and hence above defined ordering \{$u_{0}$,$u_{1}$,...,$u_{n-1}$\} of vertices of $G^{1}$ satisfies conditions of Theorem \ref{hc:c1}. The hamiltonian coloring defined by \eqref{f0} and \eqref{f1} is an optimal hamiltonian coloring whose span is, span($c$) = $(n-1)^{2}-2\L(G^{1})+\xi$ which is right-hand side of \eqref{hc:level1} using \eqref{level:p1} and \eqref{level:LG1} in the present case.

\textbf{(2) For $G^{m+1}$:}~~The order and the total level of $G^{m+1}$ are given by
\be\label{level:p2} n := 1+m+(m+1)\displaystyle\sum_{i=1}^{r}\(\displaystyle\prod_{1 \leq j \leq i}k_{i}m_{i}\) \ee

\be\label{level:LGm} \L(G^{m+1}) := \displaystyle\sum_{i=1}^{r}(m+1)\(\(\displaystyle\prod_{1 \leq j \leq i}k_{i}m_{i}\)\(\displaystyle\sum_{j=1}^{i}m_{j}\)\) \ee

Substituting \eqref{level:p2} and \eqref{level:LGm} into \eqref{eqn:lower} gives the right-hand side of \eqref{hc:level2}

Now we give systematic ordering of vertices of $G^{m+1}$ such that it satisfies conditions of Theorem \ref{hc:c1}. Note that in this case $G^{m+1}$ has $m+1$ central vertices $w^{0},w^{1},...,w^{m}$. Now denote the $k_{1}m_{1}$ children of each $w^{t}, 0 \leq t \leq m$ by $w^{t}_{0}$, $w^{t}_{1}$,...,$w^{t}_{k_{1}m_{1}-1}$ such that any $k_{1}$ consecutive vertices are in different blocks with $w^{t}_{a}$ and $w^{t}_{a+k_{1}}$ are in the same block for $0 \leq a \leq k_{1}m_{1}-k_{1}-1$. Denote the $k_{2}m_{2}$ children of each $w^{t}_{i}, 0 \leq t \leq m$ by $w^{t}_{i0}$, $w^{t}_{i1}$,...,$w^{t}_{i(k_{2}m_{2}-1)}$, $0 \leq i \leq k_{1}m_{1}-1$ such that any $k_{2}$ consecutive vertices are in different blocks with $w^{t}_{ia}$ and $w^{t}_{i(a+k_{2})}$ are in the same block for $0 \leq a \leq k_{2}m_{2}-k_{2}-1$. Inductively, denote the $k_{l+1}m_{l+1}$ children of $w^{t}_{i_{1}i_{2}...i_{l}}$ ($0 \leq t \leq m, 0 \leq i_{1} \leq k_{1}m_{1}-1, 0 \leq i_{2} \leq k_{2}m_{2}-1,...,0 \leq i_{l} \leq k_{l}m_{l}-1$) by $w^{t}_{i_{1}i_{2}...i_{l}i_{l+1}}$ where $0 \leq i_{l+1} \leq k_{l+1}m_{l+1}-1$ such that any $k_{l+1}$ consecutive vertices are in different blocks with $w^{t}_{i_{1}i_{2}...i_{l}a}$ and $w^{t}_{i_{1}i_{2}...i_{l}(a+k_{l+1})}$ are in the same block for $0 \leq a \leq k_{l+1}m_{l+1}-k_{l+1}-1$. Continue this until all vertices of $G^{m+1}$ are indexed in this way. We then rename these vertices as follows:

Let $u^{t}_{j} := w^{t}_{i_{1}i_{2}...i_{l}}$,

where $j=1+i_{1}+i_{2}(k_{1}m_{1})+...+i_{l}\(\displaystyle\prod_{i=1}^{l-1}k_{i}m_{i}\)+\displaystyle\sum_{l+1 \leq t \leq r}\(\displaystyle\prod_{i=1}^{t}k_{i}m_{i}\)$.

Define an ordering \{$u_{0}, u_{1},...,u_{n-1}$\} as follows. Let $u_{0}$ = $w^{m}$ and for $1 \leq j \leq n-m-1$, let
$$
u_{j}:=u^{t}_{s} \mbox{ where } s=\lceil j/(m+1)\rceil, \mbox{ if } j \equiv t (\mbox{mod }(m+1)).
$$
For $n-m \leq j \leq n-1$, let
$$
u_{j}:=w^{j-n+m}
$$
Note that $u_{0}, u_{n-1} \in C_{D}(G^{m+1})$, and $u_{i}$ and $u_{i+1}$, $1 \leq i \leq n-1$ are in opposite branches so that $\phi_{D}(u_{i},u_{i+1})$ = 0 and $\delta_{D}(u_{i},u_{i+1})$ = $\omega-1$ = $m$. Moreover, for $0 \leq i \leq n-1$ we have, $D(u_{i},u_{i+1}) \leq n/2$ and hence above defined ordering \{$u_{0}$,$u_{1}$,...,$u_{n-1}$\} of vertices of $G^{m+1}$ satisfies conditions of Theorem \ref{hc:c1}. The hamiltonian coloring defined by \eqref{f0} and \eqref{f1} is an optimal hamiltonian coloring whose span is span($c$) = $(n-1)(n-\om)-2\L(G^{m+1})+\xi$ which is right-hand side of \eqref{hc:level2} using \eqref{level:p2} and \eqref{level:LGm} in the present case.
\end{proof}

Note that a \emph{symmetric block graph}, denoted by $B_{p,k}$(or $B_{p,k}(d)$ if diameter is $d$) is defined in \cite{Bantva1} which is a block graph with at least two blocks such that all blocks are cliques of size $p$, each cut vertex is exactly in $k$ blocks and the eccentricity of end vertices is same. Note that symmetric block graphs $B_{p+1,k+1}$ are level-wise regular block graphs by taking $m$ = $p-1$, $m_{1}$ = $m_{2}$ = ... = $m_{k}$ = $p$ and $k_{1}$ = $k_{2}$ = ... = $k_{k}$ = $k$ in $G^{1}_{k_{1}m_{1},...,k_{r}m_{r}}$ and $G^{m+1}_{k_{1}m_{1},...,k_{r}m_{r}}$. It is straight forward to verify that in this case, Theorem \ref{thm:level} becomes \cite[Theorem 7]{Bantva1} stated as follows.

\begin{theorem}\label{sym:thm} Let $k \geq 1$, $p \geq 2$, $d \geq 3$ be integers, $r$ = $\lfloor d/2 \rfloor$ and $\Phi_{r}(x)$ = $1+x+x^{2}+...+x^{r-1}$. Then $hc\left(B_{p+1,k+1}(d)\right)$
\begin{eqnarray}\label{sym:eqn}
& = & \left\{
\small{
\begin{array}{ll}
p^{2}(k+1)\left[\Phi_{r}(kp)\left((k+1)\Phi_{r}(kp)-2r\right)
+\frac{2\left(\Phi_{r}(kp)-r\right)}{kp-1}\right]+p, & $if $ d $ is even$, \vspace{0.3 cm} \\
kp^{2}(p+1)\left[\Phi_{r}(kp)\left(k(p+1)\Phi_{r}(kp)-2r+1\right)
+\frac{2\left(\Phi_{r}(kp)-r\right)}{kp-1}\right], & $if $ d $ is odd$.
\end{array}}
\right.
\end{eqnarray}
\end{theorem}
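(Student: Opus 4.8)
The plan is to obtain Theorem~\ref{sym:thm} as a specialization of Theorem~\ref{thm:level}, not as a fresh argument: one substitutes the parameters describing $B_{p+1,k+1}(d)$ into \eqref{hc:level1}--\eqref{hc:level2} and simplifies. The first step is to fix the parameter dictionary from the block structure of $B_{p+1,k+1}(d)$. Since every block is a clique of size $p+1$, each layer size is $m_1=\cdots=m_r=p$ with $r=\lfloor d/2\rfloor$; and since each cut vertex lies in $k+1$ blocks, a non-central cut vertex spawns $k$ new blocks, so $k_2=\cdots=k_r=k$. If $d=2r$ is even the detour center is a single vertex $w$ (hence $\omega=1$) lying in $k+1$ blocks, all of which are branches, so $k_1=k+1$; moreover every block incident with $w$ has size $p+1$, so $\xi=p$, and $B_{p+1,k+1}(d)$ is the graph $G^{1}$ with these parameters. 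If $d=2r+1$ is odd the detour center is a block of size $p+1$ (hence $\omega=p+1$) whose vertices each carry $k$ further branch blocks, so $k_1=k$ and $\xi=0$ (because $\omega\geq2$), and $B_{p+1,k+1}(d)$ is the graph $G^{m+1}$ with $m+1=p+1$.

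With $m_i\equiv p$ and the branching numbers above, the nested product $\prod_{1\le j\le i}k_jm_j$ becomes a constant multiple of $(kp)^{i-1}$ and the inner sum $\sum_{j=1}^{i}m_j$ becomes $ip$, so the two combinatorial sums occurring in \eqref{hc:level1}--\eqref{hc:level2} reduce to $\sum_{i=1}^{r}(kp)^{i-1}=\Phi_r(kp)$ and $\sum_{i=1}^{r}i\,(kp)^{i-1}$. The single identity doing real work is the closed form
\be\label{eqn:geomsum}
\sum_{i=1}^{r}i\,x^{i-1}=r\,\Phi_r(x)-\frac{\Phi_r(x)-r}{x-1},
\ee
which I would prove by differentiating $\sum_{i=0}^{r}x^{i}=\frac{x^{r+1}-1}{x-1}$ and then eliminating $x^{r}$ and $x^{r+1}$ via $x^{r}=(x-1)\Phi_r(x)+1$ (a short induction on $r$ works equally well). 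Feeding \eqref{eqn:geomsum} with $x=kp$ into \eqref{level:LG1} and \eqref{level:LGm} puts $\mathcal{L}(G^{1})$ and $\mathcal{L}(G^{m+1})$ in the shape $c\,p^{2}\left(r\Phi_r(kp)-\frac{\Phi_r(kp)-r}{kp-1}\right)$ with $c=k+1$, respectively $c=k(p+1)$.

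It then remains to assemble the pieces. In the even case \eqref{level:p1} gives $n-1=(k+1)p\,\Phi_r(kp)$; substituting this together with $\mathcal{L}(G^{1})$ and $\xi=p$ into \eqref{eqn:lower} and factoring $p^{2}(k+1)$ out of the non-constant part reproduces the first line of \eqref{sym:eqn}. In the odd case \eqref{level:p2} gives $n-\omega=kp(p+1)\Phi_r(kp)$ and $n-1=p+kp(p+1)\Phi_r(kp)$; forming $(n-1)(n-\omega)$, subtracting $2\mathcal{L}(G^{m+1})$, using $\xi=0$, and factoring $kp^{2}(p+1)$ out of the result reproduces the second line of \eqref{sym:eqn}. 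Both of these reductions are mechanical once \eqref{eqn:geomsum} is in hand.

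The only step that is not purely routine is the parameter dictionary of the first paragraph: one must recognize that in the even case the unique central vertex contributes one extra branch (so $k_1=k+1$, not $k$) and carries blocks of size $p+1$ (so $\xi=p$), whereas in the odd case the center is itself a block of size $p+1$ (so $\omega=p+1$) and every central vertex carries only $k$ branches with $\xi=0$. After that, the proof is the geometric-series identity \eqref{eqn:geomsum} together with bookkeeping.
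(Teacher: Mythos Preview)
Your proposal is correct and follows exactly the paper's intended route: the paper itself derives Theorem~\ref{sym:thm} by declaring that $B_{p+1,k+1}(d)$ is a level-wise regular block graph and that ``it is straight forward to verify'' that Theorem~\ref{thm:level} specialises to \eqref{sym:eqn}. You simply carry out that verification in full, including the geometric-series identity \eqref{eqn:geomsum} and the factoring, so there is nothing to add on the method side.

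One remark worth making is that your parameter dictionary is actually more accurate than the paper's own sentence preceding Theorem~\ref{sym:thm}. The paper writes ``$m=p-1$'' and ``$k_{1}=\cdots=k$'', but with those values \eqref{hc:level1}--\eqref{hc:level2} do \emph{not} reduce to \eqref{sym:eqn}; your choices $k_{1}=k+1$ in the even case (since the unique central vertex, being a cut vertex of $B_{p+1,k+1}$, lies in $k+1$ blocks) and $m=p$ in the odd case (since the central block has $p+1$ vertices) are the ones that make the algebra close. So the ``not purely routine'' step you flag is precisely where the paper's text contains a slip, and your correction is the right one.
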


It is interesting that $B_{2,k}(2)$ are stars $K_{1,k}$, $B_{p,k}(2)$ are one point union of $k$ complete graphs (a one point union of $k$ complete graphs, also denoted by $K_{p}^{k}$, is a graph obtained by taking $v$ as a common vertex such that any two copies of $K_{p}$ are edge disjoint and do not have any vertex common except $v$), $B_{2,2}(d)$ are paths $P_{d+1}$ and $B_{2,k}(d)$ are symmetric trees (see \cite{Vaidya}). The hamiltonian chromatic number of stars $K_{1,k}$ is reported by Chartrand \emph{et al.} in \cite{Chartrand1}. The hamiltonian chromatic number of $B_{p,k}(2)$ is investigated by Bantva in \cite{Bantva1}. The hamiltonian chromatic number of paths which is equal to the antipodal radio number of paths given by Khennoufa and Togni in \cite{Khennoufa} and the hamiltonian chromatic number of symmetric trees is investigated by Bantva in \cite{Bantva1}.

\subsection{hc(G) for extended stars of blocks}

An extended star, denoted by $S_{m}^{k}$, is a tree obtained by identifying one end vertex of each $k$ copies of path of length $m$. An extended star of blocks, denoted by $S_{m,p}^{k}$, is a block graph obtain by replacing each edge in $S_{m}^{k}$ by complete graph $K_{p}$.
\begin{theorem} Let $m \geq 2$ and $p, k \geq 3$ be integers. Then
\be\label{hc:S}
hc(S_{m,p}^{k}) = km(p-1)^{2}(km-m-1)+p-1.
\ee
\end{theorem}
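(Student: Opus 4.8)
The plan is to realize $S_{m,p}^{k}$ as an instance of the machinery built in Section~\ref{sec:main}, specifically Theorem~\ref{hc:c1}. First I would set up the basic parameters. The extended star of blocks $S_{m,p}^{k}$ consists of $k$ "legs," each leg being a path of $m$ blocks, every block a copy of $K_{p}$, all legs sharing one common vertex $w$. The detour center is this single vertex $w$ (this needs a brief argument: $w$ minimizes detour eccentricity because any other vertex lies strictly inside one leg and moving toward $w$ can only shorten the distance to the farthest end vertex while a symmetric argument as in Lemma~\ref{lem1} forbids the center from sliding outward), so $\omega = 1$. Since every block attached to $w$ has size $p$, we get $\xi = p-1$. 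The order is $n = 1 + km(p-1)$, since each leg contributes $m$ blocks each adding $p-1$ new vertices. For the total detour level, the $j$-th block of a leg (counting $j=1,\dots,m$ from $w$) contributes $p-1$ vertices each at detour level $(j-1)(p-1)+\ell$ for $\ell = 1,\dots,p-1$; summing over $\ell$, $j$, and the $k$ legs and simplifying gives a closed form for $\mathcal{L}(S_{m,p}^{k})$. Plugging $n$, $\omega=1$, $\xi=p-1$, and this $\mathcal{L}$ into the lower bound \eqref{eqn:lower} should, after routine algebra, collapse to exactly $km(p-1)^{2}(km-m-1)+p-1$.

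Second, I would exhibit an ordering $\{u_0,u_1,\dots,u_{n-1}\}$ satisfying the three conditions of Theorem~\ref{hc:c1}. For condition (a) with $\omega=1$ I need $\mathcal{L}(u_0)=0$ and $\mathcal{L}(u_{n-1})=\xi=p-1$: take $u_0 = w$ and take $u_{n-1}$ to be any end vertex of the first block of some leg (detour level exactly $p-1$). For conditions (b) and (c), I would list the vertices leg-by-leg in a round-robin fashion: cycle through the $k$ legs, at each pass picking the next unused vertex in the current leg (ordered outward from $w$), so that consecutive vertices $u_i,u_{i+1}$ always lie in different legs, hence in different branches at $w$, giving $\phi_D(u_i,u_{i+1})=0$. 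This is exactly the "star shaped" condition. For condition (c), $D(u_i,u_{i+1}) \le n/2$: since $u_i$ and $u_{i+1}$ lie in different legs, $D(u_i,u_{i+1}) = \mathcal{L}(u_i)+\mathcal{L}(u_{i+1}) \le 2\bigl((m-1)(p-1)+(p-1)\bigr) = 2m(p-1) \le 1+km(p-1) = n$ when $k\ge 2$; I would check the slightly sharper bound $D(u_i,u_{i+1})\le n/2$ holds given $k\ge 3$, $m\ge 2$, $p\ge 3$, since then $n = 1+km(p-1) \ge 1+3m(p-1) > 4m(p-1) \ge 2D(u_i,u_{i+1})$ whenever $m(p-1)\ge 1$, which always holds.

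Third, with the ordering verified, Theorem~\ref{hc:c1} immediately gives $hc(S_{m,p}^{k}) = (n-1)(n-\omega) - 2\mathcal{L}(S_{m,p}^{k}) + \xi$ together with the explicit optimal coloring from \eqref{f0}--\eqref{f1}, and the numerical identification from the first step finishes the proof.

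The main obstacle I anticipate is purely bookkeeping rather than conceptual: correctly computing $\mathcal{L}(S_{m,p}^{k})$ and then verifying the algebraic collapse of $(n-1)(n-1) - 2\mathcal{L} + (p-1)$ to the claimed $km(p-1)^{2}(km-m-1)+p-1$. One must be careful that each leg has $m$ blocks (not $m$ edges in some other sense) and that the round-robin ordering genuinely terminates with a last vertex of detour level $p-1$ and not something larger — this may require choosing which leg is "first" so that the leftover vertex at the end of the round-robin sits in the innermost block. Verifying condition (c) is straightforward once $n$ is written out, since the hypotheses $k \ge 3$, $p \ge 3$, $m \ge 2$ give a comfortable margin.
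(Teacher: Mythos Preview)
Your overall architecture --- compute $n$, $\omega=1$, $\xi=p-1$, $\mathcal{L}$, then apply Theorem~\ref{hc:c1} via an explicit ordering --- matches the paper.  But the ordering you propose has a genuine gap.

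A simple round-robin through the legs, taking vertices outward, will eventually place two vertices of maximal detour level $m(p-1)$ consecutively, so that $D(u_i,u_{i+1}) = 2m(p-1)$.  Your verification of condition~(c) then asserts $1+3m(p-1) > 4m(p-1)$, which is equivalent to $1 > m(p-1)$ and is false for $m\ge 2$, $p\ge 3$.  Concretely, for $k=3$, $m=2$, $p=3$ one has $n=13$, $n/2=6.5$, but $2m(p-1)=8$, so condition~(c) fails.  (Your ordering \emph{does} work once $k\ge 4$, since then $n = 1+km(p-1) > 4m(p-1)$.)  The paper's ordering is more elaborate precisely to cover $k=3$: it interleaves vertices at level $i$ with vertices at level roughly $m+1-i$ across different branches, so that consecutive detour levels sum to about $(m+1)(p-1)$ rather than up to $2m(p-1)$; one checks $(m+1)(p-1) \le n/2$ holds for all $k\ge 3$, $m\ge 2$, $p\ge 3$.

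There is also a smaller computational slip: in a block graph whose blocks are copies of $K_p$, every non-center vertex in the $j$-th block of a leg has detour level exactly $j(p-1)$, not $(j-1)(p-1)+\ell$, because the longest $w$--$v$ path traverses all $p$ vertices of each intermediate block.  With the correct levels one gets $\mathcal{L}(S_{m,p}^{k}) = \tfrac{1}{2}\,k m(m+1)(p-1)^{2}$, and then the lower bound \eqref{eqn:lower} does collapse to the stated value.
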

\begin{proof} The order and the total detour level of $S_{m,p}^{k}$ are given by
\be\label{S:p}
n := km(p-1)+1
\ee
\be\label{S:L}
\L(G) := \frac{km(m+1)(p-1)^{2}}{2}
\ee
Substituting \eqref{S:p} and \eqref{S:L} into \eqref{eqn:lower} gives the right-hand side of \eqref{hc:S}. We now prove that the right-hand side of \eqref{hc:S} is the actual value for the hamiltonian chromatic number of extended star of blocks. Note that for this purpose it is enough to give an ordering of vertices of extended star of block which satisfies conditions of Theorem \ref{hc:c1}.

Let $w$ be the common vertex of all branches and $v_{i,j}^{l}$, $1 \leq i \leq m, 1 \leq j \leq p-1, 1 \leq l \leq k$ is the $j^{th}$ vertex at $i^{th}$ distance from central vertex $w$ in $l^{th}$ branch. Without loss of generality, we assume that $v_{i,p-1}^{l}$, $1 \leq i \leq m-1, 1 \leq i \leq k$ are cut vertices then, to facilitate hamiltonian coloring we find a linear order \{$u_{0}$,$u_{1}$,...,$u_{n-1}$\} of vertices $S_{m,p}^{k}$ as follows: We first set $u_{0}$ = $v_{1,1}^{1}$ and $u_{n-1}$ = $w$. Next, for $u_{t}$, $1 \leq t \leq n-2$, we consider the following cases.

\textsf{Case-1:} $k$ is odd.~~In this case, if $m$ is odd, then for odd $l$, let $u_{t} := v_{i,j}^{l}$,
\begin{eqnarray*}
\mbox{ where } t = &\left\{
\begin{array}{ll}
(i-1)2k(p-1)+(j-1)2k+(l-1), & \mbox{ if } i < \frac{m+1}{2}, \\ [0.3cm]
(i-1)2k(p-1)+(j-1)k+(l-1), & \mbox{ if } i = \frac{m+1}{2}, \\ [0.3cm]
(m-i)2k(p-1)+(j-1)2k+(l-1)+k, & \mbox{ if } i > \frac{m+1}{2}.
\end{array}
\right.
\end{eqnarray*}
and for even $l$, let $u_{t} := v_{i,j}^{l}$,
\begin{eqnarray*}
\mbox{ where } t = &\left\{
\begin{array}{ll}
(i-1)2k(p-1)+(j-1)2k+(l-1)+k, & \mbox{ if } i < \frac{m+1}{2}, \\ [0.3cm]
(i-1)2k(p-1)+(j-1)k+(l-1), & \mbox{ if } i = \frac{m+1}{2}, \\ [0.3cm]
(m-i)2k(p-1)+(j-1)2k+(l-1), & \mbox{ if } i > \frac{m+1}{2}.
\end{array}
\right.
\end{eqnarray*}
If $m$ is even then, for odd $l$, let $u_{t} := v_{i,j}^{l}$,
\begin{eqnarray*}
\mbox{ where } t = &\left\{
\begin{array}{ll}
(i-1)2k(p-1)+(j-1)2k+(l-1), & \mbox{ if } i \leq \frac{m}{2}, \\ [0.3cm]
(m-i)2k(p-1)+(j-1)2k+(l-1)+k, & \mbox{ if } i > \frac{m}{2}.
\end{array}
\right.
\end{eqnarray*}
and for even $l$, let $u_{t} := v_{i,j}^{l}$,
\begin{eqnarray*}
\mbox{ where } t = &\left\{
\begin{array}{ll}
(i-1)2k(p-1)+(j-1)2k+(l-1)+k, & \mbox{ if } i \leq \frac{m}{2}, \\ [0.3cm]
(m-i)2k(p-1)+(j-1)2k+(l-1), & \mbox{ if } i > \frac{m}{2}.
\end{array}
\right.
\end{eqnarray*}

\textsf{Case-2:} $k$ is even.~~In this case, let $u_{t} := v_{i,j}^{l}$,

\begin{eqnarray*}
\mbox{ where } t = &\left\{
\begin{array}{ll}
(i-1)k(p-1)+(j-1)k+(l-1), & \mbox{ if } l \mbox{ is even}, \\ [0.3cm]
(m-i)k(p-1)+(j-1)k+(l-1), & \mbox{ if } l \mbox{ is odd}.
\end{array}
\right.
\end{eqnarray*}

Then above defined ordering \{$u_{0}$,$u_{1}$,...,$u_{n-1}$\} of vertices satisfies Theorem \ref{hc:c1}. The hamiltonian coloring $c$ defined by \eqref{f0} and \eqref{f1} is an optimal hamiltonian coloring. The span of $c$ is span($c$) = $c(u_{n-1})$ = $c(u_{0})+(n-1)^{2}-2\L(S_{m,p}^{k})+p-1$ which is exactly the right-hand side of \eqref{hc:S} by using \eqref{S:p} and \eqref{S:L} in the present case of extended star of blocks which completes the proof.
\end{proof}

\subsection{hc(G) and graph operation}

The next result explain how to construct a larger block graph for which Theorem \ref{hc:main} holds  using graph operation from given graphs satisfying Theorem \ref{hc:main}.

Let $G_{1},G_{2},...,G_{k}$ are block graphs of order $n_{i}$ and $C_{D}(G_{i})$ = \{$w_{i}$\} respectively, where $i$ = $1,2,...,k$. We define $G_{k}(w)$ to be the block graph obtained by identifying a central vertex $w_{i}$ of each $G_{i}$ to a single vertex $w$. It is clear that $C_{D}(G_{k}(w))$ = \{$w$\}. The order of $G_{k}(w)$ is $n$ = $n_{1}+n_{2}+...+n_{k}-k+1$ and $\xi$ = min\{$\xi_{i}$ : $i$ = 1,2,...,$k$\}.
\begin{theorem}\label{thm:Gkw} Let $G_{1},G_{2},...,G_{k}$ are block graphs of order $n_{i}$ and $C_{D}(G_{i})$ = \{$w_{i}$\}, $1 \leq i \leq k$ for which Theorem \ref{hc:main} hold then so is for $G_{k}(w)$ and
\be\label{hc:Gkw}
hc(G_{k}(w)) = \displaystyle\sum_{i=1}^{k} \(hc(G_{i})-\xi_{i}\) + 2 \displaystyle\sum_{1 \leq i < j \leq k} (n_{i}-1)(n_{j}-1) + \xi.
\ee
\end{theorem}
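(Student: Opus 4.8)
The plan is to verify directly that $G_k(w)$ satisfies the necessary and sufficient condition of Theorem \ref{hc:main} by assembling an ordering of $V(G_k(w))$ from the individual orderings of the $G_i$, and then to compute the resulting span. Since each $G_i$ satisfies Theorem \ref{hc:main}, there is an ordering $\{u_0^{(i)}, u_1^{(i)}, \ldots, u_{n_i-1}^{(i)}\}$ of $V(G_i)$ with $\L_i(u_0^{(i)}) = 0$, $\L_i(u_{n_i-1}^{(i)}) = \xi_i$, satisfying the detour inequality \eqref{eqn:Dij} relative to $G_i$. The natural idea is to line up these blocks of vertices consecutively — first all of $G_1$, then all of $G_2$, and so on — but one must be careful about the shared vertex $w$ and about which endpoints of each piece touch. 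Because $w$ is the (unique) detour central vertex of $G_k(w)$ and $\L(w) = 0$, while the vertex realizing $\xi$ should sit at the very end of the global ordering, I would arrange the pieces so that within $G_i$ the vertex of level $\xi_i$ is adjacent (in the ordering) to vertices from a \emph{different} $G_j$ (hence in a different branch), and place the global $u_0$ and $u_{n-1}$ as the level-$0$ vertex of one piece and the level-$\xi$ vertex of the piece achieving the minimum $\xi_i = \xi$, respectively. Note $w$ itself should be counted once; the cleanest bookkeeping is to let $u_0 = w$ and drop the repeated copies.

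The key structural fact to exploit is that detour distances in $G_k(w)$ behave additively through $w$: for $x \in V(G_i)\setminus\{w\}$ and $y \in V(G_j)\setminus\{w\}$ with $i \neq j$, we have $D_{G_k(w)}(x,y) = \L(x) + \L(y)$ (every longest $x$–$y$ path must pass through the cut vertex $w$, and can be taken as a longest $x$–$w$ path in $G_i$ followed by a longest $w$–$y$ path in $G_j$, which are vertex-disjoint apart from $w$), while for $x,y$ in the same $G_i$ the detour distance is at least $D_{G_i}(x,y)$. Also $\L(x)$ computed in $G_k(w)$ equals $\L_i(x)$ computed in $G_i$ for $x \in V(G_i)$, since $w$ is central in both. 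With this, checking \eqref{eqn:Dij} for a pair $u_a, u_b$ in the global ordering splits into two cases: if both lie in the same $G_i$, the inequality follows from the corresponding inequality inside $G_i$ together with the fact that $n \geq n_i$ makes the right-hand side only smaller (one must track the $(b-a)(n-\omega)$ term — here $\omega = 1$ throughout — against $(b-a)(n_i - 1)$, and the sum-of-levels term is the same); if they lie in different pieces, one uses $D = \L(u_a) + \L(u_b)$ on the left and bounds the telescoping sum of consecutive level-pairs on the right, where the inter-piece edges contribute terms of the form $\L(\cdot) + 0$ because the junction vertices are chosen to have one endpoint at $w$ or at a branch boundary — this is where the "different branches" hypothesis is invoked exactly as in the proof of Theorem \ref{hc:c1}.

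Once the ordering is shown to satisfy \eqref{eqn:Dij}, Theorem \ref{hc:main} immediately gives that $c$ defined by \eqref{f0}–\eqref{f1} is an optimal hamiltonian coloring and that
\[
hc(G_k(w)) = (n-1)(n-\omega) - 2\L(G_k(w)) + \xi,
\]
with $\omega = 1$. It then remains to simplify the right-hand side into the claimed form \eqref{hc:Gkw}. Here I would use $n - 1 = \sum_{i=1}^k (n_i - 1)$, expand $(n-1)^2 = \left(\sum_i (n_i-1)\right)^2 = \sum_i (n_i-1)^2 + 2\sum_{i<j}(n_i-1)(n_j-1)$, and use $\L(G_k(w)) = \sum_i \L(G_i)$ (the level of every vertex is unchanged and $w$ contributes $0$). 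Since each $G_i$ satisfies $hc(G_i) = (n_i-1)^2 - 2\L(G_i) + \xi_i$, we get $\sum_i (n_i-1)^2 - 2\sum_i \L(G_i) = \sum_i (hc(G_i) - \xi_i)$, and combining gives exactly \eqref{hc:Gkw}.

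The main obstacle I anticipate is not the span computation, which is routine algebra, but the careful construction of the global ordering and the verification of \eqref{eqn:Dij} for pairs straddling the junction between two pieces: one has to ensure that the chosen endpoints of each $G_i$-block really do land in distinct branches at $w$ (so that $\phi_D = 0$ at each seam and the telescoped right-hand side collapses favorably), and one has to handle the single shared vertex $w$ without double-counting it in either $n$ or $\L$. Getting the indexing of which $n_i$ appears where, and confirming that replacing $n_i$ by the larger $n$ in the right-hand side of \eqref{eqn:Dij} is harmless (it is, because the coefficient of $(b-a)$ becomes more negative), is the delicate part.
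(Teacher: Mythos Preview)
Your proposal is correct and follows essentially the same route as the paper: concatenate the individual orderings of the $G_i$ (with $u_0=w$ counted once and $u_{n-1}$ the level-$\xi$ neighbour of $w$), verify \eqref{eqn:Dij} by the same two-case split (both endpoints in one $G_l$ using $n\geq n_l$; endpoints in different pieces using $D(u_i,u_j)=\L(u_i)+\L(u_j)$), and finish with the identical algebraic expansion of $(n-1)^2=\bigl(\sum_i(n_i-1)\bigr)^2$. The only cosmetic difference is that the paper bounds the cross-piece right-hand side via $\alpha=\max\{\L(u_t):i<t<j\}$ and $n-1\geq 2\alpha$ rather than via your ``junction vertices at level $0$'' remark, but the structure of the argument is the same.
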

\begin{proof} The order and the total level of $G_{k}(w)$ are given by
\be\label{Gkw:p}
n := n_{1}+n_{2}+...+n_{k}-k+1
\ee
\be\label{Gkw:L}
\L(G_{k}(w)) := \L(G_{1})+\L(G_{2})+...+\L(G_{k})
\ee
Substituting \eqref{Gkw:p} and \eqref{Gkw:L} into \eqref{eqn:lower} gives the right-hand side of \eqref{hc:Gkw}.

Now we give an ordering of vertices of $G_{k}(w)$ which satisfies conditions of Theorem \ref{hc:main}. Without loss of generality, we assume that the smallest block attached to $w$ lies in $G_{k}$. Let $u^{i}_{0}, u^{i}_{1},...,u^{i}_{n_{i}-1}$ be the ordering of vertices of $G_{i}$, ($1 \leq i \leq k$) which satisfies conditions of Theorem \ref{hc:main}. Let $u_{0}$ = $w$ and $u_{n-1}$ = $u^{k}_{n_{k}-1}$ then $u_{n-1} \in N(w)$ and $\L(u_{n-1})$ = $\xi$. For all other $u_{t}$, $1 \leq t \leq n-2$, let \\

$u_{t} := u_{l}^{j}$, where $t = \displaystyle\sum_{t=1}^{j}n_{j-1}+l$, $1 \leq j \leq k$, $1 \leq l \leq n_{j}-1$ and $n_{0}$ = 0.

Then note that $u_{i}$ and $u_{i+1}$ are in different branches for $1 \leq i \leq n-1$. \\
\textbf{Claim:} The ordering \{$u_{0},u_{1},...,u_{n-1}$\} satisfies \eqref{eqn:Dij}. \\
Let $u_{i}$ and $u_{j}$, $0 \leq i, j \leq n-1$ be two arbitrary vertices. We consider the following two cases. \\
\textsf{Case-1:} $u_{i}, u_{j} \in V(G_{l})$ for some $1 \leq l \leq k$. \\
If $u_{i}, u_{j} \in V(G_{l})$ for some $1 \leq l \leq k$ then by definition of ordering $u_{i}$ = $u^{l}_{a}$ and $u_{j}$ = $u^{l}_{b}$. Note that $D(u_{i},u_{j})$ = $D(u^{l}_{a},u^{l}_{b})$ and $j-i$ = $a-b$. Moreover, $u_{t} \in V(G_{l}), i \leq t \leq j$ and it is obvious that $n \geq n_{l}$. Let the right-hand side of \eqref{eqn:Dij} is $\mathcal{S}_{i,j}$ then \\
\bean
\mathcal{S}_{i,j} & = & \displaystyle\sum_{t=i}^{j-1}\(\L(u_{t})+\L(u_{t+1})\)-(j-i)(n-1)+(n-1) \\
& \leq & \displaystyle\sum_{t=i}^{j-1}\(\L(u_{t})+\L(u_{t+1})\)-(j-i-1)(n_{l}-1) \\
& = & \displaystyle\sum_{t=m}^{n-1}\(\L(u^{l}_{t})+\L(u^{l}_{t+1})\) - (a-b-1)(n_{l}-1) \\
& \leq & D(u^{l}_{a}, u^{l}_{b}) \\
& = & D(u_{i}, u_{j})
\eean
\textsf{Case-2:} $u_{i} \in V(G_{l_{1}})$ and $u_{j} \in V(G_{l_{2}})$ for some $1 \leq l_{1} \neq l_{2} \leq k$. \\
If $u_{i} \in V(G_{l_{1}})$ and $u_{j} \in V(G_{l_{2}})$ for some $1 \leq l_{1} \neq l_{2} \leq k$ then by definition of ordering $D(u_{i},u_{j})$ = $\L(u_{i})+\L(u_{j})$. Let $\alpha$ = max\{$\L(u_{t})$ : $i < t < j$\}. Let the right-hand side of \eqref{eqn:Dij} is $\mathcal{S}_{i,j}$ then
\bean
\mathcal{S}_{i,j} & = & \displaystyle\sum_{t=i}^{j-1}\(\L(u_{t})+\L(u_{t+1})\) - (j-i)(n-1)+(n-1) \\
& = & \L(u_{i}) + \L(u_{j}) + 2 \displaystyle\sum_{t=i+1}^{j-1}\L(u_{t}) - (j-i-1)(n-1) \\
& \leq & \L(u_{i}) + \L(u_{j}) - (j-i-1)(n-1-2\alpha) \\
& \leq & \L(u_{i}) + \L(u_{j}) \\
& = & D(u_{i}, u_{j})
\eean
Thus, from Case 1 and 2, we obtained an ordering \{$u_{0},u_{1},...,u_{n-1}$\} satisfies \eqref{eqn:Dij}. Hence the hamiltonian coloring defined by \eqref{f0} and \eqref{f1} is an optimal hamiltonian coloring. The hamiltonian chromatic number of $G_{k}(w)$ is given by
\bean
hc(G_{k}(w)) & = & (n-1)^{2}-2 \L(G_{k}(w)) + \xi \\
& = & \(\displaystyle\sum_{i=1}^{k}(n_{i}-k)\)^{2}-2\displaystyle\sum_{i=1}^{k}\L(G_{i})+\xi \\
& = & \displaystyle\sum_{i=1}^{k}(n_{i}-1)^{2}+2\displaystyle\sum_{1 \leq i < j \leq k}(n_{i}-1)(n_{j}-1)-2\displaystyle\sum_{i=1}^{k}\L(G_{i})+\xi \\
& = & \displaystyle\sum_{i=1}^{k}\[(n_{i}-1)^{2}-2\L(G_{i})\]+2\displaystyle\sum_{1 \leq i < j \leq k}(n_{i}-1)(n_{j}-1)+\xi \\
& = & \displaystyle\sum_{i=1}^{k}\(hc(G_{i})-\xi_{i}\)+2\displaystyle\sum_{1 \leq i < j \leq k}(n_{i}-1)(n_{j}-1)+\xi.
\eean
\end{proof}

\section{Conclusion}

In this paper, we presented a necessary and sufficient condition together with an optimal hamiltonian coloring to achieve a lower bound for the hamiltonian chromatic number of block graphs given in \cite[Theorem 1]{Bantva1}. Note that our necessary and sufficient condition deals only with an ordering of vertices without assigning actual colors (non-negative integers) to vertices which is useful to determine whether certain block graphs has the hamiltonian chromatic number is equal to a lower bound given in \cite[Theorem 1]{Bantva1} or not. We also gave two other sufficient conditions to achieve this lower bound. We presented an algorithm for the hamiltonian chromatic number of a special class of block graphs, namely $SDB(n/2)$. We also determined the hamiltonian chromatic number of level-wise regular block graphs which is more general case than symmetric block graph (given in \cite{Bantva1}) and extended star of blocks using our main Theorem \ref{hc:main}. Further, readers are suggested to find more classes of block graphs for which Theorem \ref{hc:main} hold.

%% --------------------------------------------------------------------
%       Acknowledgements
%% --------------------------------------------------------------------

\section*{Acknowledgements}

The author is grateful to anonymous referee for their valuable comments and suggestions which improved the paper.

%% --------------------------------------------------------------------
%       Bibliography
%% --------------------------------------------------------------------

\clearpage

\end{document}